\newtheorem{lemma}{Lemma}
\newtheorem{proposition}{Proposition}
\newtheorem{theorem}{Theorem}
\newtheorem{remarkk}{Remark}
\DeclareMathOperator{\dd}{d\!}
\DeclareMathOperator{\ddd}{d^2\!}
\DeclareMathOperator{\supp}{supp}
\DeclareMathOperator{\diag}{diag}
\DeclareMathOperator{\Id}{Id}
\begin{document}

\title{Persistence of generalized roll-waves under viscous perturbation}
\author{Val\'erie Le Blanc\footnote{\noindent Universit\'e de Lyon; Universit\'e Lyon 1; INSA de Lyon, F-69621; \'Ecole Centrale de Lyon; CNRS, UMR5208, Institut Camille Jordan; 43, boulevard du 11 novembre 1918; F-69622 Villeurbanne-Cedex, France. E-mail address: leblanc@math.univ-lyon1.fr.}}
\maketitle

\begin{abstract}
  The purpose of this article is to study the persistence of solution of a hyperbolic system under small viscous perturbation. Here, the solution of the hyperbolic system is supposed to be periodic: it is a periodic perturbation of a roll-wave. So, it has an infinity of shocks. The proof of the persistence is based on an expansion of the viscous solution and estimates on Green's functions.
\end{abstract}

\textbf{Keyword: }vanishing viscosity, roll-waves, Green's function.

\section{Introduction}

In this paper, we consider a one-dimensional system
\begin{equation}\label{eqpara}
  u^\varepsilon_t+f(u^\varepsilon)_x=g(u^\varepsilon)+\varepsilon u^\varepsilon_{xx}
\end{equation}
with a smooth flux $f:\mathbb R^n\to\mathbb R^n$ and a smooth function $g:\mathbb R^n\to\mathbb R^n$. We assume that the corresponding system without viscosity
\begin{equation}\label{eqhyp}
  u_t+f(u)_x=g(u)
\end{equation}
is strictly hyperbolic.

We consider a piecewise smooth function $u$ which is a distributional solution of~\eqref{eqhyp} on the domain $\mathbb R\times[0;T^*]$. We assume that $u$ is periodic in the $x$ variable, with a period $L$ and that $u$ has $m$ noninteracting Lax shocks per period.

We show that $u$ is a strong limit of solutions $u^\varepsilon$ of~\eqref{eqpara} as $\varepsilon\to0$. This work is of course motivated by the conjecture that the admissible solutions of~\eqref{eqhyp} are strong limits of solutions of~\eqref{eqpara} with the same initial data.

In the case of scalar conservation laws, the proof of this conjecture uses the maximum principle~\cite{Volpert}, and in the case of special 2$\times2$ systems, R.~J.~DiPerna proved it by a compensated compactness argument~\cite{DiPerna}. For the general case of shocks, there is a first paper of J.~Goodman and Z.~P.~Xin which proves this conjecture for small amplitude Lax shocks~\cite{GoodmanXin}. This conjecture is also proved for a single non-characteristic Lax shock or overcompressive shock by F.~Rousset~\cite{Rousset}. Here, we only consider Lax shocks but we have an infinity of shocks.
        
An other motivation of this work states in the study of roll-waves, in fluid mechanics or in general hyperbolic systems with source terms. Indeed, P.~Noble proved the existence of roll-waves for this kind of system under assumptions on the source term \cite{Noble07}. Specifically, in the case of inviscid Saint Venant equations
\begin{equation*}
  \begin{cases}
    h_{t}+(hu)_{x}=0,\\
    \displaystyle(hu)_{t}+(g\cos\theta\frac{h^2}2+hu^2)_{x}=gh\sin\theta-c_{f}u^2,
  \end{cases}
\end{equation*}
one can prove that there exist roll-waves which are persistent under small perturbation~\cite{Noble}. So, there exist solutions of inviscid Saint Venant equations, near roll-waves. Here, the idea is to prove that there exists a family of solutions of the viscous Saint Venant system
\begin{equation}
  \label{StVenant}
  \begin{cases}
    h_{t}+(hu)_{x}=0,\\
    \displaystyle(hu)_{t}+(g\cos\theta\frac{h^2}2+hu^2)_{x}=gh\sin\theta-c_{f}u^2+\varepsilon(hu_{x})_{x}, 
  \end{cases}
\end{equation}
which tends to a solution of inviscid system as $\varepsilon$ goes to 0. We prove this result in the case of full viscosity.

We can now give the full set of assumptions and formulate our main result. First, we suppose that 

\begin{description}
\item[{\bf(H1)}]system~\eqref{eqhyp} is strictly hyperbolic.
\end{description}
That means that there exist smooth matrices $P(u), D(u)$ such that $$\dd f(u)=P(u)D(u)P(u)^{-1}$$ where $D(u)=\diag(\lambda_1(u),\dots,\lambda_n(u))$ is a diagonal matrix and $\lambda_i\neq\lambda_j$ for all $i\neq j$.

\begin{description}
\item[{\bf(H2)}]$u$ is a distributional solution of~\eqref{eqhyp} on $[0;T^*]$. Moreover, we suppose that $u$ is piecewise smooth, periodic, and has $m$ noninteracting and non-characteristic Lax shocks per period.
\end{description}
That means that $u$ is smooth except at the points $(x,t)$ of smooth curves $x=X_j(t)+iL, j=1,\dots,m, i\in\mathbb Z$ and that for all $j,k,t,|X_j(t)-X_k(t)|>2r>0$ (see Figure~\ref{figu}). 
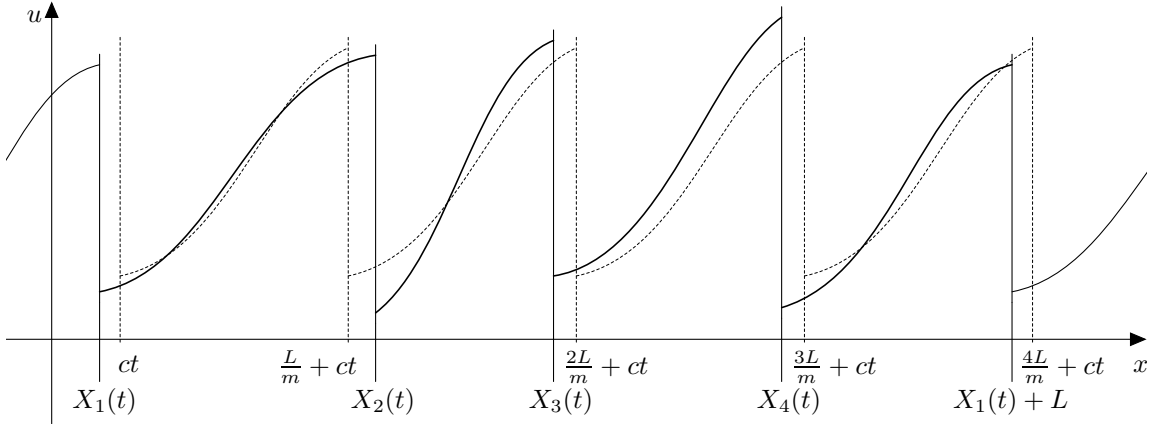
\begin{figure}[ht]
  \begin{center}
    \fontsize{10}{12}\selectfont
    \begin{tikzpicture}[line cap=round,line join=round,>=triangle 45,x=3.0cm,y=1.4cm]
\clip(-0.5,-2.4) rectangle (4.5,1.6);

\draw[->] (-0.5,-1.6) -- (4.5,-1.6);
\draw (4.55,-2) node [anchor=south east] {$x$};
\draw[->] (-0.3,-2.4) -- (-0.3,1.6);
\draw (-0.45,1.6) node [anchor=north west] {$u$};

\draw[line width=0.2pt, dash pattern=on 1pt off 1pt] (0,-1) .. controls (0.48,-0.79) and (0.64,0.81) .. (1,1.16);
\draw[line width=0.2pt, dash pattern=on 1pt off 1pt] (1,-1) .. controls (1.48,-0.79) and (1.64,0.81) .. (2,1.16);
\draw[line width=0.2pt, dash pattern=on 1pt off 1pt] (2,-1) .. controls (2.48,-0.79) and (2.64,0.81) .. (3,1.16);
\draw[line width=0.2pt, dash pattern=on 1pt off 1pt] (3,-1) .. controls (3.48,-0.79) and (3.64,0.81) .. (4,1.16);
\draw [line width=0.2pt,dash pattern=on 1pt off 1pt] (0,1.26)-- (0,-1.1);\draw [line width=0.2pt,dash pattern=on 1pt off 1pt] (0,-1.1)-- (0,-1.65);
\draw (-0.05,-2) node [anchor=south west] {$ct$};
\draw [line width=0.2pt,dash pattern=on 1pt off 1pt] (1,1.26)-- (1,-1.1);\draw [line width=0.2pt,dash pattern=on 1pt off 1pt] (1,-1.1)-- (1,-1.65);
\draw (0.65,-2.1) node [anchor=south west] {$\frac Lm+ct$};
\draw [line width=0.2pt,dash pattern=on 1pt off 1pt] (2,1.26)-- (2,-1.1);\draw [line width=0.2pt,dash pattern=on 1pt off 1pt] (2,-1.1)-- (2,-1.65);
\draw (1.9,-2.1) node [anchor=south west] {$\frac {2L}m+ct$};
\draw [line width=0.2pt,dash pattern=on 1pt off 1pt] (3,1.26)-- (3,-1.1);\draw [line width=0.2pt,dash pattern=on 1pt off 1pt] (3,-1.1)-- (3,-1.65);
\draw (2.9,-2.1) node [anchor=south west] {$\frac {3L}m+ct$};
\draw [line width=0.2pt,dash pattern=on 1pt off 1pt] (4,1.26)-- (4,-1.1);\draw [line width=0.2pt,dash pattern=on 1pt off 1pt] (4,-1.1)-- (4,-1.65);
\draw (3.9,-2.1) node [anchor=south west] {$\frac {4L}m+ct$};

\draw[line width=0.6pt] (-0.09,-1.15) .. controls (0.43,-0.93) and (0.57,0.91) .. (1.12,1.09);
\draw[line width=0.6pt] (1.12,-1.35) .. controls (1.45,-0.8) and (1.56,0.94) .. (1.9,1.23);
\draw[line width=0.6pt] (1.9,-1) .. controls (2.38,-0.84) and (2.58,0.97) .. (2.9,1.45);
\draw[line width=0.6pt] (2.9,-1.3) .. controls (3.4,-0.98) and (3.52,0.82) .. (3.91,1);
\draw (-0.09,1.1)-- (-0.09,-1.25);\draw (-0.09,-1.25) -- (-0.09,-2);
\draw (-0.25,-2.4) node [anchor=south west] {$X_1(t)$};
\draw (1.12,1.19)-- (1.12,-1.45);\draw (1.12,-1.45) -- (1.12,-2);
\draw (0.97,-2.4) node [anchor=south west] {$X_2(t)$};
\draw (1.9,1.33)-- (1.9,-1.1);\draw (1.9,-1.1) -- (1.9,-2);
\draw (1.75,-2.4) node [anchor=south west] {$X_3(t)$};
\draw (2.9,1.55)-- (2.9,-1.4);\draw (2.9,-1.4) -- (2.9,-2);
\draw (2.75,-2.4) node [anchor=south west] {$X_4(t)$};
\draw (3.91,1.1)-- (3.91,-1.25);\draw (3.91,-1.25) -- (3.91,-2);
\draw (3.61,-2.4) node [anchor=south west] {$X_1(t)+L$};

\draw[line width=0.4pt] (3.91,-1.15) .. controls (4.43,-0.93) and (4.57,0.91) .. (5.12,1.09);
\draw[line width=0.4pt] (-1.1,-1.3) .. controls (-0.6,-0.98) and (-0.48,0.82) .. (-0.09,1);

\end{tikzpicture}
    \caption[Allure of solution $u$ over one period.]{Allure of solution $u$ over one period when $u$ is scalar and $m=4$. The periodic roll-wave is drawn in dotted line. The solution which checks our assumptions is represented by continuous line. One also placed the shocks for the two solutions.}
    \label{figu}
  \end{center}
\end{figure}

Moreover, following limits are finite:
$$\partial_x^ku^{j\pm}(t):=\partial_x^ku(X_j(t)\pm0,t) = \lim_{x\to X_j(t)^\pm}\partial_x^ku(x,t).$$
Since the shocks are non-characteristic $k$-Lax shocks, we have:
$$\lambda_1(u^{j-})\leq\dots\leq\lambda_{k-1}(u^{j-})<X_j'(t)<\lambda_{k}(u^{j-})\leq\dots\leq\lambda_n(u^{j-}),$$
$$\lambda_1(u^{j+})\leq\dots\leq\lambda_{k}(u^{j+})<X_j'(t)<\lambda_{k+1}(u^{j+})\leq\dots\leq\lambda_n(u^{j+}).$$
This assumption ensures the existence of at least one sonic point between two shocks.

We refer to~\cite{Noble} for the existence of such a solution in the case of Saint Venant equations. This result can be extended to general hyperbolic systems.

These assumptions imply that there exists a viscous profile for each shock. More precisely, for all $j$, there exists $V^j$ such that 
\begin{equation}
  \label{viscousprofile}
  V_{\xi\xi}^j-(f(V^j)-X_j'V^j)_\xi =0          
\end{equation}
and 
$$V^j(\pm\infty,t)=u^{j\pm}(t).$$ 
We will give more details on the properties of $V^j$ in Section~\ref{VISCOUSPROFILE}. Now, we only need to expose some assumption of linear stability. Consider for $\tau\leq T^*$, the operator
$$\mathcal L_\tau^j w = w_{zz}-(\dd f(V^j(z,\tau))-X_j'(\tau))w_z.$$
We assume that the viscous shock profiles are linearly stable. This assumption is equivalent to an Evans function criterion~\cite{ZumbrunHoward}.
\begin{description}
\item[{\bf(H3)}]$\forall \tau\in[0;T^*],j=1,\dots,m,\mathcal L_\tau^j$ is such that $D_\tau^j(\lambda)\neq0\ \ \forall\lambda,\Re\lambda\geq0,\lambda\neq0,$ and ${D_\tau^j}'(0)\neq0$, where $D_\tau^j$ is the Evans function of $\mathcal L_\tau^j$.
\end{description}

We can now state our main theorem:
\begin{theorem}\label{thethmpersistence}
  Under assumptions {\bf(H1)--(H2)--(H3)}, for all $\varepsilon>0$, there exists a unique solution $u^\varepsilon$ of~\eqref{eqpara} on $[0;T^*]$ such that
\begin{equation}\label{CI}u^\varepsilon(t=0,x)=u(t=0,x).\end{equation}
Moreover, we have the convergences $$\|u^\varepsilon-u\|_{L^\infty([0;T^*],L^1(0;L))}\to0,\quad\text{ as }\varepsilon \to 0.$$ And for any $\eta\in(0,1),$ $$\sup_{0\leq t\leq T^*,|x-X_j(t)|\geq\varepsilon^\eta}|u^\varepsilon(x,t)-u(x,t)|\to 0,\quad\text{ as }\varepsilon \to 0.$$
\end{theorem}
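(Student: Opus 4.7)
The plan is to adapt the matched-asymptotic strategy of Goodman--Xin~\cite{GoodmanXin} to the periodic setting with an infinity of shocks, and to close the argument by Green's function estimates for the linearization around the resulting approximate solution, in the spirit of Zumbrun--Howard~\cite{ZumbrunHoward}.

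First, I would construct, for each sufficiently large $N$, an approximate solution $u^\varepsilon_{\mathrm{app}}$ of~\eqref{eqpara} with a residual of size $\varepsilon^N$. Away from the shocks $x=X_j(t)$, I expand formally $u^\varepsilon_{\mathrm{app}}=u+\varepsilon u_1+\cdots+\varepsilon^N u_N$ and identify each $u_k$ inductively as the solution of a linear hyperbolic problem (the linearization of~\eqref{eqhyp} about $u$) driven by the previous terms; by \textbf{(H1)}--\textbf{(H2)} this problem is well-posed on $[0,T^*]$ with bounded one-sided traces at each $X_j(t)$. Near each shock, I use the inner variable $z=(x-X_j(t))/\varepsilon$ and expand $u^\varepsilon_{\mathrm{app}}=V^j(z,t)+\varepsilon V_1^j(z,t)+\cdots$, together with shock-position corrections $X_j^\varepsilon=X_j+\varepsilon X_{j,1}+\cdots$. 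Each $V_k^j$ solves a linear equation $\mathcal L_t^j V_k^j=(\text{source})$; by~\textbf{(H3)} the kernel of $\mathcal L_t^j$ is spanned by the translation mode $\partial_z V^j$, and the resulting Fredholm solvability condition is satisfied by choosing $X_{j,k}$ appropriately. Matching the inner and outer expansions on an intermediate scale and patching through a smooth cutoff yields an $L$-periodic $u^\varepsilon_{\mathrm{app}}$ satisfying
$$\partial_t u^\varepsilon_{\mathrm{app}}+\partial_x f(u^\varepsilon_{\mathrm{app}})-g(u^\varepsilon_{\mathrm{app}})-\varepsilon\partial_x^2 u^\varepsilon_{\mathrm{app}}=O(\varepsilon^N).$$

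Next, I would write $u^\varepsilon=u^\varepsilon_{\mathrm{app}}+\varepsilon^N w^\varepsilon$ and express $w^\varepsilon$ via Duhamel's formula using the Green's function $G^\varepsilon(x,t;y,s)$ of the linearization of~\eqref{eqpara} about $u^\varepsilon_{\mathrm{app}}$. The estimate of $G^\varepsilon$ would decouple into: (i) parabolic contributions in the interiors between shocks, where standard heat-kernel-type bounds apply, with the linearized hyperbolic characteristics carrying the slow modes; and (ii) layer contributions near each $X_j(t)$, where~\textbf{(H3)} provides, via the Evans-function machinery of Zumbrun--Howard, pointwise Green's function bounds for $\mathcal L_t^j$ with exponential transverse decay and a zero-eigenvalue translation mode that is absorbed by the shock-position corrections chosen above. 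Periodicity is then handled by summing the single-shock contributions over the translates $X_j(t)+iL$: this sum converges uniformly in $\varepsilon$ thanks to the exponential transverse decay combined with the uniform separation $>2r$ from~\textbf{(H2)}.

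Finally, I would close a fixed-point/bootstrap argument for $w^\varepsilon$ on $[0,T^*]$ in a norm dictated by the Green's function bounds, choosing $N$ large enough that the residual $O(\varepsilon^N)$ and the quadratic self-interaction of $\varepsilon^N w^\varepsilon$ are both absorbable. The convergences in the theorem then follow: for the $L^1$-norm, the inner layers of $u^\varepsilon_{\mathrm{app}}$ contribute an $O(\varepsilon)$ mass per period (width $\sim\varepsilon$, height $O(1)$) and there are only $m$ shocks in $(0,L)$; for the uniform convergence on $|x-X_j(t)|\geq\varepsilon^\eta$, one uses the exponential tails $|V^j(z,t)-u^{j\pm}(t)|\leq Ce^{-c|z|}$ with $|z|\geq\varepsilon^{\eta-1}\to\infty$. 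The main obstacle is step~(ii): I must obtain Green's function bounds that are uniform in the number of shocks and that correctly match, between any two consecutive profiles, the hyperbolic characteristics of the outer problem with the mode structure of each $\mathcal L_t^j$; here the sonic point between consecutive shocks guaranteed by~\textbf{(H2)} is what prevents characteristic errors from propagating indefinitely along the period, while~\textbf{(H3)} provides the spectral decay on each layer. Both assumptions are therefore indispensable at this step.
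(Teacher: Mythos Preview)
Your three-step strategy (matched asymptotics, Green's function bounds, fixed-point closure) is exactly the scaffold the paper uses, and your treatment of the inner/outer expansion, the role of \textbf{(H3)} in the Fredholm alternative for the correctors, and the final $L^1$/pointwise conclusions are all in line with the paper. However, your handling of the Green's function step contains a genuine gap.

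You write that periodicity is handled by ``summing the single-shock contributions over the translates $X_j(t)+iL$'', and that this sum converges uniformly in $\varepsilon$ ``thanks to the exponential transverse decay combined with the uniform separation $>2r$''. This justification does not hold. The Zumbrun--Howard kernel for a single profile (the paper's Proposition~\ref{propZumbrunHoward}) is not exponentially localized near the shock: apart from an exponentially-decaying-in-time piece, it consists of Gaussians of width $\sqrt{\varepsilon t}$ \emph{travelling along the outgoing characteristics} at speeds $\tilde a_i^{j\pm}$. These outgoing modes reach the neighbouring shocks in time $O(1)$, regardless of the separation $2r$, and must be absorbed there as \emph{incoming} waves for the next layer. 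A naive superposition of independent single-shock kernels does not control this repeated exchange, and the sum over translates does not converge for the reason you give.

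The paper's mechanism is different and is where periodicity is actually used. Following Grenier--Rousset, one writes an approximate kernel $G_{app}^\varepsilon=\sum_{j=1}^m\sum_{k=0}^7 S_k^j\Pi_k^j$ with $8m$ building blocks per period (viscous piece at each shock, incoming/outgoing pieces on each side, transport pieces between consecutive shocks), each made $L$-periodic by translating in $(z,y)$ simultaneously. The residuals are then organized into a \emph{finite} $8m\times 8m$ error matrix $\mathcal M$, and the construction closes by showing $\mathcal M^p\to 0$ (its limiting spectral radius is $2\alpha<1$). Periodicity is essential precisely because it makes this matrix finite; the paper states explicitly (Remark~2 and the Conclusion) that the argument does not go through for non-periodic data. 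Your sonic-point remark is relevant, but only in the construction of the outer correctors $u_i$ via characteristics, not in controlling the Green's function iteration.

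A secondary point: the paper stops the expansion at order~$2$, not at arbitrary $N$. Order~$2$ is exactly what is needed to get $\|\tilde q^\varepsilon_{zz}\|_{L^1(0;L)}\le C\varepsilon^{\gamma}$, which is used in the closing bootstrap on $\tilde w_{zzz}$; pushing to higher $N$ is unnecessary and would require checking boundedness of all higher correctors.
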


The proof of this theorem is done in three steps: construction of an ap\-prox\-i\-mate solution (which gives us an expansion of $u^{\varepsilon}$ in $\varepsilon$), estimates on the semigroup gen\-er\-at\-ed by linearized operator around this approximate solution and a Banach fixed point argument to deal with the full problem. \newline

The paper is organized as follows. In Section~\ref{CONSTRUCTION}, we build an approximate solution $u_{app}^\varepsilon$  of the full problem~\eqref{eqpara} close to $u$, solution of~\eqref{eqhyp} up to order 2 with respect to $\varepsilon$. This is done separating slow parts where $u_{app}^\varepsilon$ is close to $u$ and shock parts where ${u_{app}^\varepsilon}|_{X_j\pm\varepsilon^\gamma}$ is close to $V_j$. More precisely, one expands $u^\varepsilon$ in the slow part as 
$$u^\varepsilon(x,t)=u(x,t)+\varepsilon u_1(x,t)+\varepsilon^2 u_2(x,t)+o(\varepsilon^2)$$
where $u$ is the solution of~\eqref{eqhyp} and $u_i$ are solutions of the linearized equation of~\eqref{eqhyp} around $u$, which is well-posed thanks to assumption {\bf(H3)}. In shock parts, the expansion at shock $j$ is
$$u^\varepsilon(x,t)=V^j(\xi^j(x,t,\varepsilon),t)+\varepsilon V_1^j(\xi^j(x,t,\varepsilon),t)+\varepsilon^2V_2^j(\xi^j(x,t,\varepsilon),t)+o(\varepsilon^2)$$
where the stretched variable is  $\xi^j(x,t,\varepsilon)=\frac{x-X_j(t)}{\varepsilon}+\delta^j(t)$, $V^j$ is solution of viscous equation~\eqref{viscousprofile} and $V_i^j,i=1,2,$ are solutions of the linearized equation of~\eqref{viscousprofile} around $V^j$. Moreover, the functions $u_i,V_i^j$ are related by matching conditions which ensure regularity on the approximate solution $u_{app}^\varepsilon$, built by convex combination of the expansions:
$$u_{app}^\varepsilon= \sum_{j=1}^{m}\mu\left(\frac{x-X_j(t)}{\varepsilon^{\gamma}}\right)I^{j\varepsilon}(x,t)+\left(1-\sum_{j=1}^m  \mu\left(\frac{x-X_j(t)}{\varepsilon^{\gamma}}\right)\right)O^\varepsilon(x,t).$$
where $$\mu(x)=\left\{\begin{array}{l}0\text{ if }|x|>2,\\1\text{ if }|x|<1,\end{array}\right.$$ 
\begin{align*}
  &I^{j\varepsilon}(x,t)=V^j(\xi^j(x,t,\varepsilon),t)+\varepsilon V_1^j(\xi^j(x,t,\varepsilon),t)+\varepsilon^2V_2^j(\xi^j(x,t,\varepsilon),t),\\
  &O^\varepsilon(x,t)=u(x,t)+\varepsilon u_1(x,t)+\varepsilon^2 u_2(x,t).
\end{align*}
With this construction, we prove the theorem:
\begin{theorem}\label{thmuapp}
  There exists an approximate solution $u_{app}^\varepsilon$ of~\eqref{eqpara} defined on $[0;T^*]$. If $\varphi$ is a smooth change of variable which fixes the shocks ($\forall t,i,j, \varphi((j-1)\frac Lm+iL,t)=X_j(t)+iL$), and $\tilde u_{app}^\varepsilon(z,t)=u_{app}^\varepsilon(\varphi(z,t),t)$, then $\tilde u_{app}^\varepsilon$ verifies the equation
  $$(\tilde u_{app}^\varepsilon)_{t}+f(\tilde u_{app}^\varepsilon)_{x}-\varepsilon(\tilde u_{app}^\varepsilon)_{xx}-g(\tilde u_{app}^\varepsilon)=\tilde q^\varepsilon(x,t)$$
  with the following estimates on $\tilde q^\varepsilon$
  \begin{eqnarray}
    \label{Linfinity}\|\tilde q^\varepsilon\|_{L^\infty},\|\tilde q_t^\varepsilon\|_{L^\infty},\|\tilde q_{tt}^\varepsilon\|_{L^\infty}\leq C\varepsilon^{2\gamma},\\
    \label{L1}\|\tilde q^\varepsilon\|_{L^1(0;L)},\|\tilde q_t^\varepsilon\|_{L^1(0;L)},\|\tilde q_{tt}^\varepsilon\|_{L^1(0;L)}\leq C\varepsilon^{3\gamma},\\
    \label{qz}\|\tilde q_{z}^\varepsilon\|_{L^\infty},\|\tilde q_{zt}^\varepsilon\|_{L^\infty}\leq C\varepsilon^{\gamma},\|\tilde q_{z}^\varepsilon\|_{L^1(0;L)},\|\tilde q_{zt}^\varepsilon\|_{L^1(0;L)}\leq C\varepsilon^{2\gamma},\\
    \label{qzz}\|\tilde q_{zz}^\varepsilon\|_{L^\infty}\leq C,\|\tilde q_{zz}^\varepsilon\|_{L^1(0;L)}\leq C\varepsilon^{\gamma}.
  \end{eqnarray}
\end{theorem}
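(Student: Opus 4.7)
The plan is to carry out a formal matched asymptotic expansion in $\varepsilon$ and then bound the residual obtained after truncation and gluing.

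First I would construct the outer expansion in the smooth region. Substituting the ansatz $O^\varepsilon = u + \varepsilon u_1 + \varepsilon^2 u_2$ into~\eqref{eqpara} and equating powers of $\varepsilon$, the $\varepsilon^0$ equation is exactly the hyperbolic equation satisfied by $u$ (given by {\bf(H2)}); the $\varepsilon^1$ equation is the linearized equation around $u$ with forcing $u_{xx} + \dd g(u)u_1$; and the $\varepsilon^2$ equation is the same linearized equation with a quadratic forcing from $u_1$ and the viscous term $(u_1)_{xx}$. These are linear hyperbolic Cauchy problems on each slow region between two consecutive shocks, and the one-sided boundary values at $x = X_j(t)^\pm$ will be supplied by the matching conditions derived in the next step.

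Next I would build the inner expansion near each shock $X_j$ in the stretched variable $\xi = (x-X_j(t))/\varepsilon + \delta^j(t)$. Substituting $I^{j\varepsilon} = V^j + \varepsilon V_1^j + \varepsilon^2 V_2^j$ and collecting powers of $\varepsilon$: the leading order gives the traveling-wave profile equation~\eqref{viscousprofile}; the next two orders give linear equations $\mathcal L_t^j V_i^j = F_i^j$ whose source $F_i^j$ is built from $V^j$, its $t$-derivatives, the lower-order correctors, and the shift $\delta^j(t)$. Solvability on $\mathbb R$ with at most polynomial growth at infinity requires orthogonality to the kernel of the adjoint of $\mathcal L_t^j$; thanks to {\bf(H3)} this operator is Fredholm with one-dimensional kernel spanned by $V^j_\xi$, so the solvability condition reduces to one scalar identity per order which is used to determine $\delta^j(t)$ and its first correction. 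The matching conditions $V_i^j(\xi,t) \sim u_i^{j\pm}(t) + \xi\,\partial_x u^{j\pm}(t) + \cdots$ as $\xi\to\pm\infty$ then fix $V_i^j$ uniquely by exponential decay of $V^j_\xi$, and simultaneously provide the missing one-sided boundary data for $u_1, u_2$ on each side of every shock curve.

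I would then glue with the cutoff $\mu$ to define $u_{app}^\varepsilon$ and compute the residual $\tilde q^\varepsilon$ after the change of variable $\varphi$ that straightens the shocks. Away from the transition annuli $\varepsilon^\gamma < |x-X_j(t)| < 2\varepsilon^\gamma$ one has $u_{app}^\varepsilon = O^\varepsilon$ or $u_{app}^\varepsilon = I^{j\varepsilon}$, so the residual is exactly the truncation remainder of order $\varepsilon^3$ of the corresponding expansion, which is negligible compared with the contribution from the transition region. In that annulus the residual has the schematic form
\[
\tilde q^\varepsilon = \mu'\!\bigl(\tfrac{\cdot}{\varepsilon^\gamma}\bigr)\varepsilon^{-\gamma}\bigl(f(I^{j\varepsilon})-f(O^\varepsilon)\bigr) + \varepsilon\,\mu''\!\bigl(\tfrac{\cdot}{\varepsilon^\gamma}\bigr)\varepsilon^{-2\gamma}(I^{j\varepsilon}-O^\varepsilon) + R^\varepsilon,
\]
and the matching conditions ensure that $I^{j\varepsilon}-O^\varepsilon$ together with its first two $z$-derivatives has pointwise size $\varepsilon^{2\gamma}$ on this annulus: the exponentially decaying parts of $V^j-u^{j\pm}$ are super-algebraically small for $|\xi|\gtrsim\varepsilon^{\gamma-1}$, while the polynomial-in-$\xi$ tails of $V_i^j$ cancel against the Taylor expansion of $u + \varepsilon u_1 + \varepsilon^2 u_2$ at $X_j^\pm$ up to order $\varepsilon^{2\gamma}$ by construction. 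Combining this $\varepsilon^{2\gamma}$ pointwise size with the fact that each $z$-derivative of $\mu(\cdot/\varepsilon^\gamma)$ costs an extra factor $\varepsilon^{-\gamma}$, and that the support has length $O(\varepsilon^\gamma)$, one reads off the four groups of estimates~\eqref{Linfinity}--\eqref{qzz} directly; the $t$-derivatives are controlled identically since $X_j, \delta^j, u_i, V_i^j$ are all smooth in $t$.

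The main obstacle is the second step: closing the inner hierarchy requires, at each order $i$, solving $\mathcal L_t^j V_i^j = F_i^j$ with prescribed polynomial growth at $\pm\infty$, the growth coefficients being the as-yet-undetermined one-sided traces of $u_i$. One must check that the shift corrections $\delta^j(t)$ and the missing boundary data for the $u_i$'s can be chosen simultaneously and smoothly in $t$; this is precisely where {\bf(H3)} is used, through the description of the generalized kernel and cokernel of $\mathcal L_t^j$ on exponentially weighted spaces (simple zero of the Evans function at the origin, and no others in $\{\Re\lambda\ge 0\}$), which guarantees that the Fredholm alternative and the matching constraints together form an invertible linear system at each order.
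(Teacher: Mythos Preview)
Your overall architecture (matched asymptotics, gluing, residual estimates) is the same as the paper's, but two points need correction.

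First, the size of $I^{j\varepsilon}-O^\varepsilon$ on the transition annulus is $O(\varepsilon^{3\gamma})$, not $O(\varepsilon^{2\gamma})$. The whole purpose of carrying the expansion to second order (including $u_2$ and $V_2^j$, with the quadratic-in-$\xi$ matching condition~\eqref{match2}) is precisely that the Taylor remainder of $O^\varepsilon$ about $X_j$ is then cubic in $x-X_j\sim\varepsilon^\gamma$, while the exponential tails of the profile are negligible. With only $\varepsilon^{2\gamma}$, the term $\mu_x(f(I^{j\varepsilon})-f(O^\varepsilon))\sim\varepsilon^{-\gamma}(I^{j\varepsilon}-O^\varepsilon)$ would give $\|\tilde q^\varepsilon\|_{L^\infty}\le C\varepsilon^{\gamma}$ and $\|\tilde q^\varepsilon\|_{L^1}\le C\varepsilon^{2\gamma}$, one power of $\varepsilon^\gamma$ short of~\eqref{Linfinity}--\eqref{L1}; this would not close the argument of Section~\ref{CONVERGENCEPERSISTENCE}. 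The paper makes the bookkeeping explicit by splitting the residual as $q_1^\varepsilon+q_2^\varepsilon+q_3^\varepsilon$ (outer truncation remainder, inner truncation remainder, gluing error) and estimating each separately.

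Second, your description of how $\delta^j$ and the missing one-sided data for $u_i$ are determined is not quite the mechanism. In the system case the solvability of the inner problem is not a single scalar orthogonality condition fixing $\delta^j$. One integrates the inner ODE once (picking up an $\mathbb R^n$-valued integration constant $C^j(t)$), solves for $U_1^j=V_1^j-D_1^j$ with finite limits at $\pm\infty$ (here the Fredholm property under {\bf(H3)} is used), reads off those limits explicitly, and then eliminates $C^j$ via the matching. What remains is the $n$-dimensional linearized Rankine--Hugoniot relation~\eqref{u1shock} coupling $u_1^{j\pm}$ and $\delta_{0t}^j$. Combined with the characteristic transport of $u_1$ between shocks, this yields at each shock an $n\times n$ linear system in the $n-1$ \emph{outgoing} characteristic components of $u_1$ together with $\delta_{0t}^j$; its invertibility is exactly the Majda--Liu determinant condition, which {\bf(H3)} implies via~\cite{ZumbrunSerre}. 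Your final paragraph gestures at an ``invertible linear system'', but the decisive ingredient is this hyperbolic transmission condition for the outer problem, not the kernel/cokernel of $\mathcal L_t^j$ alone.
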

Here, $u_{app}^\varepsilon$ is constructed as a perturbation of $u$ going to order 2, which allows us to have estimates on $\tilde q_{zz}^\varepsilon$ in $L^1$ in $\varepsilon^\gamma$. This property will be useful to prove the convergence of $u_{app}^\varepsilon-u^\varepsilon$ to 0.

In Section~\ref{GREEN}, we linearize~\eqref{eqpara} in the neighbourhood of the approximate solution $u_{app}^\varepsilon$ and we compute estimates on the Green's function. To do so, we use the method of iterative construction of the Green's function, which was first used by E.~Grenier and F.~Rousset in~\cite{GrenierRousset}. So, we consider approximations of the Green's functions in neighbourhood of the shocks (given by K.~Zumbrun and P.~Howard in~\cite{ZumbrunHoward}) and we build our own approximation far away from the shock, using the characteristic curves.

The last section is dedicated to the proof of theorem
\begin{theorem}\label{thmconvergence}
  Under assumptions {\bf(H1)--(H2)--(H3)}, for all $\varepsilon$, there exists $u^\varepsilon$ solution of~\eqref{eqpara}-\eqref{CI} on $(0;T^*)$. And this $u^\varepsilon$ verifies the convergences:
  $$\|u^\varepsilon-u_{app}^\varepsilon\|_{L^\infty((0;T^*)\times\mathbb R)}\to 0,$$
  $$\|u^\varepsilon-u_{app}^\varepsilon\|_{L^\infty((0;T^*),L^1(\mathbb R))}\to 0$$
  when $\varepsilon$ goes to zero.
\end{theorem}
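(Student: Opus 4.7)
The plan is to write $u^{\varepsilon}=u_{app}^{\varepsilon}+w^{\varepsilon}$, derive a closed equation for $w^{\varepsilon}$, represent it by Duhamel's formula with the Green's function $G^{\varepsilon}$ constructed in Section~\ref{GREEN}, and close the argument by a Banach fixed point in a function space whose norm combines $L^{\infty}$ and $L^{1}$ control weighted by suitable powers of $\varepsilon$. In the straightened coordinates of Theorem~\ref{thmuapp}, $\tilde w^{\varepsilon}:=\tilde u^{\varepsilon}-\tilde u_{app}^{\varepsilon}$ satisfies
\[
\tilde w^{\varepsilon}_{t}+\mathcal{L}^{\varepsilon}\tilde w^{\varepsilon}=-\tilde q^{\varepsilon}+\mathcal{N}(\tilde w^{\varepsilon}),
\]
where $\mathcal{L}^{\varepsilon}$ is the linearization of~\eqref{eqpara} (transported by $\varphi$) around $\tilde u_{app}^{\varepsilon}$ and $\mathcal{N}$ collects the Taylor remainders of $f$ and $g$; thus $\mathcal{N}(\tilde w^{\varepsilon})=O(|\tilde w^{\varepsilon}|^{2})+O(|\tilde w^{\varepsilon}||\tilde w^{\varepsilon}_{x}|)$. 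Because $u^{\varepsilon}$ and $u_{app}^{\varepsilon}$ share $u(0)$ up to the explicit correctors $\varepsilon u_{1}+\varepsilon^{2}u_{2}$ and the shock-layer pieces, $\tilde w^{\varepsilon}(0)$ is known explicitly and small in both $L^{\infty}$ and $L^{1}$.

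Duhamel's formula gives
\[
\tilde w^{\varepsilon}(t)=G^{\varepsilon}(t,0)\tilde w^{\varepsilon}(0)-\int_{0}^{t}G^{\varepsilon}(t,s)\tilde q^{\varepsilon}(s)\,ds+\int_{0}^{t}G^{\varepsilon}(t,s)\mathcal{N}(\tilde w^{\varepsilon}(s))\,ds,
\]
and the right-hand side defines a map $\mathcal{T}$ to which I would apply the contraction mapping principle. The natural working norm is
\[
\|w\|_{*}=\sup_{t\in[0,T^{*}]}\bigl(\varepsilon^{-\alpha}\|w(\cdot,t)\|_{L^{\infty}}+\varepsilon^{-\beta}\|w(\cdot,t)\|_{L^{1}(0,L)}\bigr),
\]
with exponents $0<\alpha<\beta$ chosen compatibly with the Green's function bounds and the residual estimates~\eqref{Linfinity}--\eqref{qzz}. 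The linear source is treated using~\eqref{L1} on the $L^{1}$ part and~\eqref{Linfinity} on the $L^{\infty}$ part; whenever integration by parts transfers a spatial derivative onto $G^{\varepsilon}$, the derivative estimates~\eqref{qz}--\eqref{qzz} are what keep a gain $\varepsilon^{\gamma}$, which is precisely why the approximate solution was expanded to order two.

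For the nonlinear term, the purely quadratic contributions satisfy $\|\mathcal{N}_{0}(w)\|_{L^{p}}\lesssim\|w\|_{L^{\infty}}\|w\|_{L^{p}}$, so they bring a factor $\varepsilon^{\alpha}$ and are absorbed for $\varepsilon$ small. The flux remainder contributes $\bigl(f(\tilde u_{app}^{\varepsilon}+w)-f(\tilde u_{app}^{\varepsilon})-\dd f(\tilde u_{app}^{\varepsilon})w\bigr)_{x}$, which produces a term of the type $w\,w_{x}$; I would integrate by parts to throw the outer derivative onto $G^{\varepsilon}$ and then control the product pointwise using the bounds on $\partial_{x}G^{\varepsilon}$ from Section~\ref{GREEN}, rather than enlarging the norm to monitor $w_{x}$. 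Contraction of $\mathcal{T}$ on a ball of $\|\cdot\|_{*}$-radius $1$ then follows, giving existence and uniqueness of $u^{\varepsilon}$ on $[0,T^{*}]$ together with $\|u^{\varepsilon}-u_{app}^{\varepsilon}\|_{L^{\infty}}=O(\varepsilon^{\alpha})$ and $\|u^{\varepsilon}-u_{app}^{\varepsilon}\|_{L^{\infty}_{t}L^{1}_{x}}=O(\varepsilon^{\beta})$, which is stronger than the stated convergence.

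The main obstacle is the nonlinear flux contribution. The Green's function has two very different regimes---a parabolic one of width $\varepsilon$ around each shock in which $\partial_{x}G^{\varepsilon}$ carries a singular factor $\varepsilon^{-1}$, and a hyperbolic regime away from the shocks that follows the characteristics---so the Duhamel integral of $w\,w_{x}$ must be split along these regimes and each piece estimated separately. Closing the fixed point thus requires a careful calibration of $\gamma$, $\alpha$ and $\beta$ so that, first, the residual estimates of Theorem~\ref{thmuapp} give a quantity strictly smaller than $\|w\|_{*}$, and second, the singular factor of $\partial_{x}G^{\varepsilon}$ in the shock layers is dominated by the $\varepsilon^{\gamma}$ gain of $\tilde q^{\varepsilon}$ in $L^{1}$ and by the smallness of $w$ itself. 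Once this calibration is fixed, the rest is routine, and uniqueness follows from the same contraction estimate applied to the difference of two solutions.
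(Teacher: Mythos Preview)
Your overall plan---Duhamel with $G^\varepsilon$, treat $\tilde q^\varepsilon$ as a small source, and absorb the quadratic remainder---is the right picture, but the norm you propose cannot be closed with the Green's function information actually available. Estimate~\eqref{Greenestimate} reads
\[
\sup_{y,\tau}\int_{0}^{T^*}\!\!\int_{\mathbb R}|G^\varepsilon(t,\tau,z,y)|\,\dd z\,\dd t\;+\;\sqrt\varepsilon\,\sup_{y,\tau}\int_{0}^{T^*}\!\!\int_{\mathbb R}|\partial_zG^\varepsilon(t,\tau,z,y)|\,\dd z\,\dd t\leq C,
\]
i.e.\ it is a bound on the \emph{space--time} $L^1$ of $G^\varepsilon$ (and of $\partial_zG^\varepsilon$) for each fixed $(\tau,y)$. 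From Duhamel this produces, for a source $F$,
\[
\int_{0}^{T^*}\!\!\int_{0}^{L}|\tilde w|\,\dd z\,\dd t\lesssim \|F\|_{L^\infty_{\tau,y}}\quad\text{or}\quad \lesssim \|F\|_{L^1_{\tau}L^\infty_{y}},
\]
but \emph{not} $\sup_t\|\tilde w(t)\|_{L^1}$ nor $\sup_t\|\tilde w(t)\|_{L^\infty}$. Your norm $\|\cdot\|_*$ is built on $\sup_t$ quantities, and nothing in Theorem~\ref{thmGreen} lets you bound those; in particular there is no pointwise-in-$t$ estimate $\sup_{t,\tau,y}\int|G^\varepsilon|\,\dd z\leq C$ to call on. The same difficulty hits your treatment of the flux remainder: integrating by parts onto $\partial_zG^\varepsilon$ still only buys you a space--time $L^1$ bound (with a loss of $\varepsilon^{-1/2}$), not an $L^\infty_t$ one.

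The paper closes the loop differently. It works entirely with space--time $L^1$ norms, but enlarges the bootstrap functional $E$ to include $\tilde w_z,\tilde w_t,\tilde w_{zz},\tilde w_{tz},\tilde w_{tzz},\tilde w_{zzz}$, each carrying its own power of $\varepsilon$. The equations for these are obtained by differentiating~\eqref{Leps} in $t$ and $z$ (this is why the $t$-derivative estimates~\eqref{Linfinity}--\eqref{qzz} on $\tilde q^\varepsilon$ are needed), and each is then fed back through~\eqref{Greenestimate}. The $L^\infty$ control you want is recovered \emph{a posteriori}, by the periodic Sobolev/Fourier inequality
\[
\|\tilde w\|_{L^\infty}\leq C\bigl(\|\tilde w_t\|_{L^1_{t,z}}+\|\tilde w_{tzz}\|_{L^1_{t,z}}\bigr),
\]
and $\|\tilde w\|_{L^\infty_tL^1_z}$ from $\|\tilde w_t\|_{L^1_{t,z}}$. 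So the mechanism for pointwise smallness is not a direct Green's-function mapping property but rather Sobolev embedding applied to the enlarged family of $L^1_{t,z}$ estimates; your two-component norm lacks exactly the derivatives ($\tilde w_t$, $\tilde w_{tzz}$) that make this step work. If you want a genuine fixed-point version, you would have to build those derivatives into $\|\cdot\|_*$ and still measure everything in $L^1_{t,z}$, at which point you are essentially reproducing the paper's continuation argument.
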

This is done using standard arguments for parabolic problems. Indeed, we combine estimates on $\tilde q^\varepsilon$, and estimate on the Green's function to obtain estimates on $u^\varepsilon-u_{app}^\varepsilon$, and its derivatives, depending on $\varepsilon$ and uniform in time for $\varepsilon$ small enough. Then, using the convergence of $u_{app}^\varepsilon$ to $u$, we immediately deduce Theorem~\ref{thethmpersistence}.

\section{Construction of the approximate solution}\label{CONSTRUCTION}
The purpose of this section is to prove Theorem~\ref{thmuapp} on the existence of the approximate solution $u_{app}^\varepsilon$ of~\eqref{eqpara}. In a first step, we compute formally this approximate solution using outer and inner expansions of order 2. Indeed, in slow part, where $\nabla u$ is bounded, the solution $u^\varepsilon$ of~\eqref{eqpara} may be approximated by truncation of the formal series
$$u^\varepsilon(x,t) \sim O^\varepsilon(x,t)=u(x,t)+\varepsilon u_1(x,t)+\varepsilon^2 u_2(x,t)$$ 
where $u$ is the solution of~\eqref{eqhyp} we want to approach. Similarly, near the shocks $j$, we search for $u_{app}^\varepsilon$ with the inner expansion 
$$I^{j\varepsilon}(x,t)=V^j(\xi^j(x,t,\varepsilon),t)+\varepsilon V_1^j(\xi^j(x,t,\varepsilon),t)+\varepsilon^2V_2^j(\xi^j(x,t,\varepsilon),t)$$ where $\xi^j(x,t,\varepsilon)=\frac{x-X_j(t)}{\varepsilon}+\delta_0^j(t)+\varepsilon\delta_1^j(t)$ is the stretched variable and $V^j$ is the viscous shock profile, solution of~\eqref{viscousprofile}.
We match this expansion by continuity of $u_{app}^\varepsilon$ and its spatial derivatives.

In this section, we formally substitute these expansions in~\eqref{eqpara} to find equations satisfied by $u_i$ and $V_i^j$, $i=1,2,j=1,\dots,m$, and matching conditions. Then, we prove the existence of the $u_i$ and $V_i^j$ on $(0;T^*)$. Furthermore, we give rigorous estimates on the error terms. We can remark here that we search for an approximation of order 2, this will be useful to obtain good estimates on the second derivatives of the error term.

\subsection{Formal calculation and derivation of the equations}

Substituting $O^\varepsilon$ into~\eqref{eqpara} and identifying the power of $\varepsilon$ in the expressions, we get for $x\neq X_j(t)$: 
\begin{alignat*}{1}
  \mathcal O(\varepsilon^0):&\ u_{t}+(f(u))_{x}-g(u)=0,\\
  \mathcal O(\varepsilon^1):&\ u_{1,t}+(\dd f(u)\cdot u_{1})_{x}-\dd g(u)\cdot u_{1}=u_{xx},\\
  \mathcal O(\varepsilon^2):&\ u_{2,t}+(\dd f(u)\cdot u_{2})_{x}-\dd g(u)\cdot u_{2}=u_{1xx}-\frac12(\ddd f(u)\cdot (u_{1},u_{1}))_{x}\\
  &\hspace{9cm}+\frac12\ddd g(u)\cdot (u_{1},u_{1}).
\end{alignat*}
A similar calculation for $I^{j\varepsilon}$ yields the set of equations:
\begin{alignat*}{1}
  \mathcal O(\varepsilon^{-1}):&\ V_{\xi\xi}^j-(f(V^j)-X_j'V^j)_{\xi}=0,\\
  \mathcal O(\varepsilon^0):&\ V_{1\xi\xi}^j-((\dd f(V^j)-X_j')\cdot V_{1}^j)_{\xi}=V_{t}^j+V_{\xi}^j\delta_{0t}^j-g(V^j),\\
  \mathcal O(\varepsilon^1):&\ V_{2\xi\xi}^j-((\dd f(V^j)-X_j')\!\cdot\!V_{2}^j)_{\xi}=V_{1t}^j+V_{1\xi}^j\delta_{0t}^j+V_{\xi}^j\delta_{1t}^j+\frac12(\ddd f(V^j)\!\cdot\!(V_{1}^j,V_{1}^j))_{\xi}\\
  &\hspace{10cm}-\dd g(V^j)\cdot V_{1}^j.
\end{alignat*}

We remark that the equations for $u_i$ are hyperbolic equations: the first one is~\eqref{eqhyp}, so nonlinear, and the others are the linearization of~\eqref{eqhyp} around $u$. Similarly, the equations for the shock profiles are ordinary equations: nonlinear for $V^j$, we recognize~\eqref{viscousprofile}, and its linearization around $V^j$ for $V_1^j$ and $V_2^j$. To maximize the order of the approximation, we couple these equations with boundary conditions, connecting $u_i$ and $V_i^j$. First, we note
$$\partial_x^ku_i^{j\pm}(t):=\partial_x^ku_i(X_j(t)\pm0,t) = \lim_{x\to X_j(t)^\pm}\partial_x^ku_i(x,t),\quad i=1,2.$$
Then, we rewrite $O^\varepsilon$ and $I^\varepsilon$ with the variable $\xi$, in a vicinity of shock $j$, and we ask the two functions to coincide as $\varepsilon$ goes to $0$. Therefore, we make Taylor expansion of order 2 with respect to $\varepsilon$. For example, for $\xi>0$, large enough,
\begin{equation*}
  \begin{split}
     O^\varepsilon(X_j(t)+\varepsilon(\xi&-\delta_0^j(t)-\varepsilon\delta_1^j(t)),t)=u^{j+}(t)+\varepsilon \big(u_1^{j+}(t)+u_x^{j+}(\xi-\delta_0^j)\big)\\
     &+\frac{\varepsilon^2}2\big(u_2^{j+}(t)+2u_{1x}^{j+}(t)(\xi-\delta_0^j)-2u_x^{j+}\delta_1^j+u_{xx}^{j+}(t)(\xi-\delta_0^j)^2\big)+o(\varepsilon^2)
  \end{split}
\end{equation*}
and
$$I^\varepsilon(X_j(t)+\varepsilon(\xi-\delta_0^j(t)-\varepsilon\delta_1^j(t)),t)=V^j(\xi,t)+\varepsilon V_1^j(\xi,t)+\varepsilon^2V_2^j(\xi,t)+o(\varepsilon^2)$$
Identifying the terms of same order on $\varepsilon$, we get as $\xi\to\pm\infty$: 
\begin{alignat}{1}
  \label{match0}V^j(\pm\infty,t)&=u^{j\pm}(t),\\
  \label{match1}V_{1}^j(\xi,t)&=u_{1}^{j\pm}(t)+u_{x}^{j\pm}(t)(\xi-\delta_{0}^j(t))+o(1),\\
  \label{match2}V_{2}^j(\xi,t)&=u_{2}^{j\pm}(t)+u_{1x}^{j\pm}(t)(\xi-\delta_{0}^j(t))+\frac12u_{xx}^{j\pm}(t)(\xi-\delta_{0}^j(t))^2-u_{x}^{j\pm}(t)\delta_{1}^j(t)+o(1).
\end{alignat}
For more details on the computation of these conditions, we refer to~\cite{Fife}.

\subsection{Existence of solutions of the outer and inner problems}
In this section, we show that the solutions $u_i$ and $V_i^j$ of the previous equations exist under assumption (\textbf{H3}) on the spectral stability of the viscous shock profile. We first remark that the leading-order outer function $u$ is exactly the solution of~\eqref{eqhyp} which we want to approximate. Therefore, we first prove the existence of the $V^j$, and then we prove the existence of $u_1$ and all the $V_1^j$. Simultaneously, we prove the existence of the $\delta_0^j$. Similarly, we prove the existence of $u_2, V_2^j$ and $\delta_1^j$.

\subsubsection{Construction at order 0}\label{VISCOUSPROFILE}
In this section, we deal with the existence of $u,V^j$ which satisfy equations~\eqref{eqhyp}-\eqref{viscousprofile} and matching condition~\eqref{match0}. The existence of $u$ is exactly assumption (\textbf{H2}). Since $u$ is a distributional solution, $u$ verifies Rankine-Hugoniot conditions at each shock $j$:
$$f(u^{j+})-f(u^{j-})=X_j'(t)(u^{j+}-u^{j-}).$$

This property ensures the existence of the viscous shock profile $V^j$ which verifies
\begin{equation}\label{profile}V_{\xi\xi}^j-(f(V^j)-X_j'(t)V^j)_{\xi}=0\end{equation}
and the asymptotic conditions:
$$V^j(\pm\infty,t)=u^{j\pm}(t).$$
For the existence for all $t$ of such a profile and its properties, we refer to N.~Kopell and L.~N.~Howard~\cite{KopellHoward}.
In this section, we just recall the convergence rate of the profile and its derivatives as $\xi\to\pm\infty$. 
Since $u^{j+}$ and $u^{j-}$ are hyperbolic rest points for the ordinary differential equation~\eqref{profile}, we have for some $\omega>0$ and for any $\alpha\in\mathbb{N}$, 
\begin{eqnarray}
  \label{estprofile1}|\partial_t^\alpha V^j(\xi,t)-\partial_t^\alpha u^{j\pm}(t)|&\leq &e^{-\omega|\xi|},\quad\forall\xi\in \mathbb R,\\
  \label{estprofile2}|\partial_\xi^\alpha V^j(\xi,t)|&\leq &e^{-\omega|\xi|}, \quad\forall \xi\in\mathbb R.
\end{eqnarray}

\subsubsection{Construction at order 1: existence of $V_1^j$ and $u_1$}
In this section, we prove the existence of $u_1, V_1^j,\delta_0^j$ on $(0;T^*)$ such that 
\begin{eqnarray}\label{u1}~\!\!&~\!\!&\!\!\!\!u_{1,t}+(df(u)\cdot u_{1})_{x}-dg(u)\cdot u_{1}=u_{xx},\\
  ~\!\!&~\!\!&\!\!\!\!V_{1\xi\xi}^j-((df(V^j)-X_j')\cdot V_{1}^j)_{\xi}=V_{t}^j+V_{\xi}^j\delta_{0t}^j-g(V^j),\\
  ~\!\!&~\!\!&\!\!\!\!V_{1}^j(\xi,t)=u_{1}^{j\pm}(t)+u_{x}^{j\pm}(t)(\xi-\delta_{0}^j(t))+o(1),\xi\to\pm\infty.\end{eqnarray}
We first remark that these equations are linear. As in~\cite{Rousset}, it is convenient to deal with bounded solutions. Therefore, we write 
$$U_1^j=V_1^j-D_1^j$$
where $D_1^j$ is a smooth function such that:
$$D_1^j=\left\{\begin{array}{ll}\xi u_x^{j-}(t)&\text{ if }\xi<-1,\\\xi u_x^{j+}(t)&\text{ if }\xi>1.\end{array}\right.$$
Consequently, $U_1^j$ solves:
\begin{eqnarray}
  \label{U1}~\!\!&~\!\!&\!\!\!\!U_{1\xi\xi}^j-((df(V^j)-X_j')\cdot U_1^j)_\xi=\delta_{0t}^jV_\xi^j+h^j(\xi,t),\\
  \label{U1bord}~\!\!&~\!\!&\!\!\!\! U_1^j(\pm\infty,t)=u_{1}^{j\pm}(t)-\delta_{0}^j(t)u_{x}^{j\pm}(t)
\end{eqnarray}	
with $$h^j(\xi,t)=-D_{1\xi\xi}^j+V_t^j+((\dd f(V^j)-X'_j)D_1^j)_\xi-g(V^j).$$

From estimates~\eqref{estprofile1},~\eqref{estprofile2}, we deduce that $h$ satisfies: $$h^j(\xi,t)=\frac \dd{\dd t}u^{j\pm}(t)+(\dd f(u^{j\pm})-X_j')u_x^{j\pm}(t)-g(u^{j\pm})+O(e^{-\alpha|\xi|}),\alpha>0.$$
And, since $u$ is a smooth solution of~\eqref{eqhyp}, we have $h\in L^1(\mathbb R)$ and: $h^j(\xi,t)=O(e^{-\alpha|\xi|}).$ Integrating~(\ref{U1}) with respect to $\xi$ yields
\begin{equation}\label{U1int}U_{1\xi}^j-(\dd f(V^j)-X_j')U_1^j=\delta_{0t}^jV^j+\int_0^\xi h^j(\eta,t)\dd\eta+C^j(t)\end{equation}
where $C^j(t)$ is a constant, only depending on $t$.

Let us solve the problem~\eqref{u1}-\eqref{U1int} with matching condition~\eqref{U1bord}. Following~\cite{Rousset}, we construct the solution of this system in two steps. First, for all $j$, we fix $t$ and $\delta_0^j$, we find $U_1^j$ solution of~\eqref{U1int} with finite limits at $\pm\infty$. Since these limits are explicit and only depends on $t, \delta_0^j$, and $C^j$, we use the matching condition \eqref {U1bord} to rewrite~\eqref{u1} as a hyperbolic boundary value problem where $u_1$ and $\delta_0^j$ are the only unknowns. After solving this system, we use the previous construction to obtain $U_1^j$ solution of~\eqref{U1int} with matching conditions~\eqref{U1bord}.

So, we fix $t$ and $\delta_0^j$ for all $j$. With exactly the same arguments as in~\cite{Rousset}, we prove the existence of $U_1^j$ for all $j$. Hence, using assumption (\textbf{H3}) on the viscous shock profile and theory of Fredholm operators, we show that $U_1^j$ exists and the limits satisfy:
$$\lim_{\xi\to\pm\infty}U_1^j(\xi,t)=-(\dd f(u^{j\pm})-X_j')^{-1}(\delta_{0t}^ju^{j\pm}+H^{j\pm}+C^j)$$
where $H^{j\pm}=\int_0^{\pm\infty}h(\eta,t)\dd\eta$.

We now use matching conditions~(\ref{U1bord}) to eliminate $C^j$ in these relations. Indeed, we have
$$(\dd f(u^{j+})-X_j')(u_{1}^{j+}-\delta_{0}^ju_{x}^{j+})=-(\delta_{0t}^ju^{j+}+H^{j+}+C^j),$$
$$(\dd f(u^{j-})-X_j')(u_{1}^{j-}-\delta_{0}^ju_{x}^{j-})=-(\delta_{0t}^ju^{j-}+H^{j-}+C^j),$$
and their difference is
\begin{equation}
  \label{u1shock}A^{j+}u_1^{j+}-A^{j-}u_1^{j-}+\delta_{0t}^j(u^{j+}-u^{j-})=\delta_0^j(A^{j+}u_x^{j+}-A^{j-}u_x^{j-})-(H^{j+}-H^{j-})
\end{equation}
where $A^{j\pm}=\dd f(u^{j\pm})-X_j'(t)$.

Now, we have to solve~\eqref{u1},~\eqref{u1shock}. In order to find a solution of this system, we rewrite it by fixing the shocks. Since the shocks do not interact, we can define a change of variable $Z$ (see Figure~\ref{figZ}) which is bijective, continuous in $(x,t)$, piecewise linear in $x$ and piecewise smooth: $$Z(t,x)=\frac{x-X_j(t)}{X_{j+1}(t)-X_j(t)}\frac Lm+(j-1)\frac Lm\quad\text{ if }x\in[X_j(t);X_{j+1}(t)], i=1,\dots,m.$$
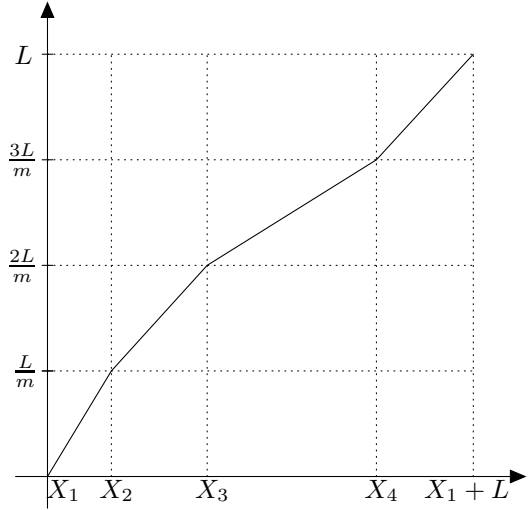
\begin{figure}[ht]
  \begin{center}
    \fontsize{10}{12}\selectfont
    \begin{tikzpicture}[line cap=round,line join=round,>=triangle 45,x=1.4cm,y=1.4cm]
\clip(-0.4,-0.3) rectangle (4.5,4.5);

\draw[->] (-0.3,0) -- (4.5,0);
\draw[->] (0,-0.3) -- (0,4.5);
\foreach \y in {,1,2,3,4}
\draw[shift={(0,\y)}] (2pt,0pt) -- (-2pt,0pt);

\draw (0.6,1)-- (1.5,2);
\draw (1.5,2)-- (3.09,3);
\draw (3.09,3)-- (4,4);
\draw (0,0)-- (0.6,1);

\draw [dotted] (0,1)-- (4,1);
\draw [dotted] (0,2)-- (4,2);
\draw [dotted] (0,3)-- (4,3);
\draw [dotted] (0,4)-- (4,4);

\draw [dotted] (0.6,0)-- (0.6,4); 
\draw [dotted] (1.5,0)-- (1.5,4);
\draw [dotted] (3.09,0)-- (3.09,4);
\draw [dotted] (4,0)-- (4,4);

\draw (-0.1,0.06) node[anchor=north west] {$X_1$};
\draw (0.4,0.06) node[anchor=north west] {$X_2$};
\draw (1.3,0.06) node[anchor=north west] {$X_3$};
\draw (2.89,0.06) node[anchor=north west] {$X_4$};
\draw (3.45,0.06) node[anchor=north west] {$X_1+L$};
\draw (0,1.24) node[anchor=north east] {$ \frac{ L }{ m } $};
\draw (0,2.24) node[anchor=north east] {$ \frac{ 2L }{ m } $};
\draw (0,3.24) node[anchor=north east] {$ \frac{ 3L }{ m } $};
\draw (-0.05,4.17) node[anchor=north east] {$L$};
\end{tikzpicture}
    \caption{Example of the change of variable $Z$ in the case $m=4$, for some t.}
    \label{figZ}
  \end{center}
\end{figure}\\
We also define $v_1$ by $$u_1(x,t)=v_1(Z(t,x),t).$$ 
It follows from these definitions that $v_1$ solves
\begin{equation}v_{1t}+(Z_x\dd f(u)+Z_t)(Z^{-1}(t,z),t)v_{1z}+\bar h(z,t)v_1-\tilde h(z,t)=0\end{equation}
where $z\mapsto x=Z^{-1}(t,z)$ is the inverse of $x\mapsto z=Z(t,x)$, and, $\bar h$ and $\tilde h$ only depend on $Z, u, u_{x},$ and $ u_{xx}$.

We now use the fact that $\dd f(u)$ is diagonalizable, $\dd f(u)=P(u)^{-1}D(u)P(u)$ so 
$$(Z_x\dd f(u)+Z_t)(Z^{-1}(t,z),t)=\tilde P(z,t)^{-1}\tilde D(z,t)\tilde P(z,t).$$ 
Since zeroth order terms do not play any role in the wellposedness issue, we consider the simplified system
\begin{eqnarray}
  \label{wtransport} w_{1t}+\tilde D(z,t)w_{1x}=\tilde k(z,t),\\
  \label{wchoc} A^{j+}(P^{j+})^{-1}w_1^{j+}-A^{j-}(P^{j-})^{-1}w_1^{j-}+\delta_{0t}^j(u^{j+}-u^{j-})=l^{j}(t)
\end{eqnarray}	
with $\tilde k$ and $l^{j}$ known functions. Therefore, we have to solve this system on $[0;L]$ under periodic boundary conditions. Since $0$ and $L$ correspond to the same shock, the periodic boundary conditions are in fact the shock conditions~(\ref{wchoc}) for $j=1$. 

Equation~\eqref{wtransport} is a linear transport equation on $w_{1i}, i=1,\dots, n$. Since, generically, the existence of $u$ smooth on $[0;T^*]$ ensures that the characteristics do not intersect on $[0;T^*]$, they can be used to build $w_{1,i}$ smooth between shocks, using the initial condition. So, it suffices to verify that the conditions~(\ref{wchoc}) at the shocks are well-posed. We must therefore count the incoming and outgoing information at the shock. As we can see in Figure \ref{figcaracteristics},  for $i<k$, the incoming characteristics come from the right, so we obtain the value of $w_{1i}^{j+}.$ For $i>k$, the incoming characteristics come from the left, so we get $w_{1i}^{j-}.$ And for $i=k$, the sign of the eigenvalue change between two shocks: negative on the right of a shock and positive on the left, so, using again characteristic construction, $w_{1k}$ is defined on the whole interval delimited by the shocks: we obtain $w_{1k}^{j+}$ and $w_{1k}^{j-}$.By this method we have built $w_{1i}$ on the right or left side of each shock. We now use the boundary conditions~\eqref{wchoc} to obtain all the components of $w_1^j$, and $\delta_{0t}^{j}$.
\begin{figure}[ht]
  \begin{center}
    \fontsize{10}{12}\selectfont
    \begin{tikzpicture}[line cap=round,line join=round,x=5.0cm,y=4.0cm]
\clip(-0.5,-0.15) rectangle (2.5,1.25);

\draw[line width=0.6pt] (0,0)-- (2,0);
\draw [-open triangle 45, line width=0.2pt] (-0.5,0)--(2.5,0);
\draw (2.5,-0.1) node[anchor=south east] {$x$};

\draw[-triangle 45,line width=0.6pt] (0,0)-- (0,1.2);
\draw (-0.1,1.25) node[anchor=north west] {$t$};

\draw[line width=0.6pt] (1,0)-- (1,1.1);
\draw[line width=0.6pt] (2,0)-- (2,1.1);

\draw[line width=1pt,dotted] (0.37,0) .. controls (0.5211, 0.52591) and (0.79606, 0.87745) .. coordinate[midway](flechedot1) (1,1);
\draw[-angle 60,line width=1pt] (flechedot1) -- +(.00007,.0001);
\draw[line width=1pt,dotted] (1.22,0) .. controls (1.73594, 0.15285) and (1.64453, 0.85593) .. coordinate[midway](flechedot2) (2,1);
\draw[-angle 60,line width=1pt] (flechedot2) -- +(.00005,.0001);

\draw[line width=0.8pt,dash pattern=on 1pt off 3pt on 5pt off 3pt] (0,1) .. controls (0.14824, 0.63353) and (0.19677, 0.21742) .. coordinate[midway](flechepat1) (0.25499, 0);
\draw[-angle 60,line width=1pt] (flechepat1) -- +(-.00003,.0001);
\draw[line width=0.8pt,dash pattern=on 1pt off 3pt on 5pt off 3pt] (1,1) .. controls (1.24375, 0.3896) and (1.63047, 0.39678) .. coordinate[midway](flechepat2)(1.71,0);
\draw[-angle 60,line width=1pt] (flechepat2) -- +(-.0001,.0001);

\draw[line width=0.6pt] (0,1) .. controls (0.19353, 0.79136) and (0.31322, 0.29634) .. coordinate[midway](fleche1)(0.43,0);
\draw[-angle 60,line width=1pt] (fleche1) -- +(-.00005,.0001);
\draw[line width=0.6pt] (0.5,0) .. controls (0.61094, 0.24612) and (0.89922, 0.31786) .. coordinate[midway](fleche2)(1,1);
\draw[-angle 60,line width=1pt] (fleche2) -- +(.00008,.0001);
\draw[line width=0.6pt] (1,1) .. controls (1.04515, 0.72679) and (1.14866, 0.17438) .. coordinate[midway](fleche3)(1.36,0);
\draw[-angle 60,line width=1pt] (fleche3) -- +(-.000035,.0001);
\draw[line width=0.6pt] (1.62,0) .. controls (1.72188, 0.84875) and (1.91172, 0.55461) .. coordinate[midway](fleche4) (2,1);
\draw[-angle 60,line width=1pt] (fleche4) -- +(.00008,.0001);

\draw[line width=0.2pt] (-0.38,0) .. controls (-0.27812, 0.84875) and (-0.08828, 0.55461) .. (0,1);
\draw[line width=0.4pt,dotted] (-0.78,0) .. controls (-0.26406, 0.15285) and (-0.35547, 0.85593) .. (0,1);
\draw[line width=0.2pt] (2,1) .. controls (2.19353, 0.79136) and (2.31322, 0.29634) .. (2.43,0);
\draw[line width=0.3pt,dash pattern=on 1pt off 3pt on 5pt off 3pt] (2,1) .. controls (2.14824, 0.63353) and (2.19677, 0.21742) .. (2.25499, 0);

\draw (-0.05,0) node[anchor=north west] {$0$};
\draw (0.93,-0.) node[anchor=north west] {$L/2$};
\draw (1.95,-0.) node[anchor=north west] {$L$};
\draw (0.5,0.80) node[anchor=north west] {$i>k$};
\draw (0.67,0.29) node[anchor=north west] {$i=k$};
\draw (1.17,0.8) node[anchor=north west] {$i<k$};
\draw (1.01,0.29) node[anchor=north west] {$i=k$};

\draw [line width=0.6pt] (0.465,-0.02) -- (0.465,0.02);
\draw (0.415,0) node[anchor=north west] {$s_1$};
\draw [line width=0.6pt] (1.51,-0.02) -- (1.51,0.02);
\draw (1.46,0) node[anchor=north west] {$s_2$};

\end{tikzpicture}
    \caption[Characteristic curves between two shocks.]{Characteristic curves between two shocks, example with $m=2$. We also plot $s_1$ and $s_2$ which are sonic points for $k$-th eigenvalue, that means $\lambda_k(s_j)=0$.}
    \label{figcaracteristics}
  \end{center}
\end{figure}
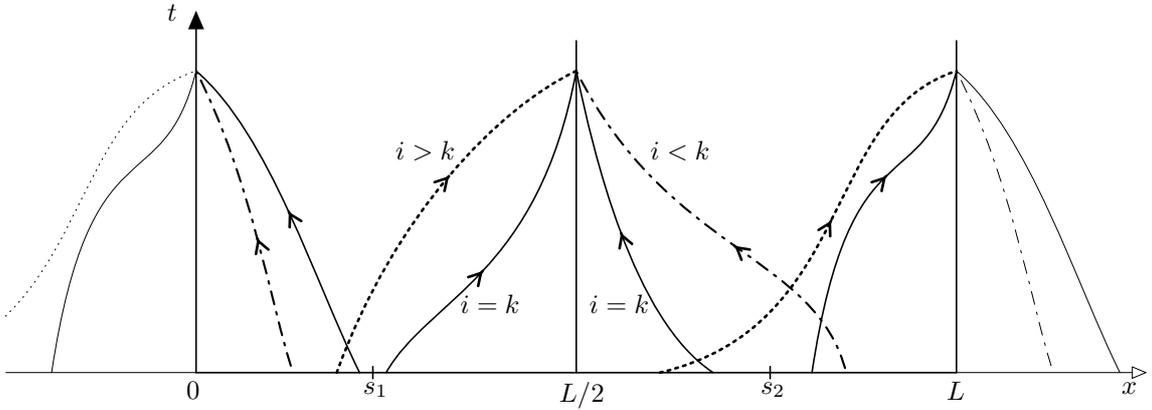

Indeed, if we note by $r_{i}$ the i-th eigenvector of $\dd f(u)$, and $w_{1}=\tilde P\sum_{i}a_{i}r_{i}$, then $a_{i}^{j+}$ is known for $i\leq k$ and $a_{i}^{j-}$ for $i\geq k$ by our construction and we rewrite~\eqref{wchoc} as
$$\sum_{i=1}^n A^{j+}a_{i}^{j+}r_{i}^{j+}-\sum_{i=1}^n A^{j-}a_{i}^{j-}r_{i}^{j-}+\delta_{0t}^{j}(u^{j+}-u^{j-})=l^j(t)$$
or equivalently
$$\sum_{i=1}^n (\lambda_{i}^{j+}-X_{j}')a_{i}^{j+}r_{i}^{j+}-\sum_{i=1}^n (\lambda_{i}^{j-}-X_{j}')a_{i}^{j-}r_{i}^{j-}+\delta_{0t}^{j}(u^{j+}-u^{j-})=l^j(t).$$
This yields the linear system on the unknowns $a_i^{j+}$ for $i>k$, $a_i^{j-}$ for $i<k$, and $\delta_{0t}^j$ 
\begin{multline*}
  \sum_{i>k} (\lambda_{i}^{j+}-X_{j}')a_{i}^{j+}r_{i}^{j+}-\sum_{i<k} (\lambda_{i}^{j-}-X_{j}')a_{i}^{j-}r_{i}^{j-}+\delta_{0t}^{j}(u^{j+}-u^{j-})\\
  =l^j(t)-\sum_{i\leq k} (\lambda_{i}^{j+}-X_{j}')a_{i}^{j+}r_{i}^{j+}+\sum_{i\geq k} (\lambda_{i}^{j-}-X_{j}')a_{i}^{j-}r_{i}^{j-}.    
\end{multline*}
These equations have unique solutions if and only if the system obtained is invertible for all $t$, that is the  Majda-Liu condition:
$$\forall j=1,\dots,m,\quad\det(r_1^{j-},\dots,r_{k-1}^{j-},u^{j+}-u^{j-},r_{k+1}^{j+},\dots,r_n^{j+})\neq0.$$
Using~\cite{ZumbrunSerre}, our assumption (\textbf{H3}) implies Majda-Liu condition. To finish the construction of the approximate solution, we use again the characteristics. By this way, we have built a solution on the whole space $\mathbb R$. To ensure the regularity of the solution far away from the shocks, we only need suitable compatibility conditions on the initial data.

Finally, we have proved the existence of $u_{1}$ and $\delta_{0t}^j$ for all $j$ and for $0\leq t\leq T^*$. The previous construction give us $V_{1}^j$ for all $j$. We can then apply the same method to obtain the existence of $V_{2}^{j}, \delta_{1t}^j$ and $u_{2}$, since the linear system has the same terms of maximal order.

\begin{remarkk}
\em  Since the construction of viscous shock profile only depends on the shock, we can use the previous construction even if $u$ is not periodic. However, we will see in the following that the periodicity of $u$ allows us first to obtain bounds on $u_1$ and secondly to build the Green's function in Section~\ref{GREEN}. 
\end{remarkk}

\subsection{Construction of the approximate solution}
We complete the construction of an approximate solution of equation~\eqref{eqpara}. First, we define a smooth function $\mu$ such that:
$$\mu(x)=\left\{\begin{array}{l}0\text{ if }|x|>2,\\1\text{ if }|x|<1.\end{array}\right.$$
Then, the approximate solution $u_{app}^\varepsilon$ is defined as
$$u_{app}^\varepsilon= \sum_{j=1}^{m}\mu\left(\frac{x-X_j(t)}{\varepsilon^{\gamma}}\right)I^{j\varepsilon}(x,t)+\left(1-\sum_{j=1}^m\mu\left(\frac{x-X_j(t)}{\varepsilon^{\gamma}}\right)\right)O^\varepsilon(x,t)$$
and $u_{app}^\varepsilon$ verifies
$$(u_{app}^{\varepsilon})_t+f(u_{app}^\varepsilon)_x-\varepsilon (u_{app}^{\varepsilon})_{xx}-g(u_{app}^\varepsilon)=q^\varepsilon$$
where $q^\varepsilon(x,t)=\sum_{i=1}^3q_i^\varepsilon(x,t)$ is an error term given by
\begin{equation*}
  \begin{split}
    q_1^\varepsilon(x,t)=
    &(1-\mu^j)\big[\big(f(O^\varepsilon)-f(u)-\varepsilon \dd f(u)\cdot 		u_1-\varepsilon^2\dd f(u)\!\cdot\! u_2-\frac{\varepsilon^2}2\ddd f(u)\!\cdot\!(u_1,u_1)\big)_x\\
    &-\big(g(O^\varepsilon)-g(u)-\varepsilon \dd g(u)\cdot u_1-\varepsilon^2\dd g(u)\cdot u_2-\frac{\varepsilon^2}2\ddd g(u)\cdot(u_1,u_1)\big)\\
    &-\varepsilon^3u_{2xx}\big],\\
    q_2^\varepsilon(x,t)=
    &\mu^j[(f(I^{j\varepsilon})\!-\!f(V^j)\!-\!\varepsilon \dd f(V^j)\cdot V_1^j-\varepsilon^2\dd f(V^j)\!\cdot\! V_2^j\!-\!\frac{\varepsilon^2}2\ddd f(V^j)\!\cdot\!(V_1^j,V_1^j))_x\\
    &-(g(I^{j\varepsilon})-g(V^j)-\varepsilon \dd g(V^j)\cdot V_1^j)\\
    &+\varepsilon^2(\delta_{1t}V_{1\xi}^j+V_{2t}^j+\delta'V_{2\xi}^j)],\\
    q_3^\varepsilon(x,t)=
    &\mu_{t}^j(I^{j\varepsilon}-O^\varepsilon)-\varepsilon  \mu_{xx}^j(I^{j\varepsilon}- O^\varepsilon) -2\varepsilon \mu_{x}^j(I^{j\varepsilon}-O^\varepsilon)_x +\mu_{x}^j(f(I^{j\varepsilon})-f(O^\varepsilon))\\
    &+f(\mu^jI^{j\varepsilon}+(1-\mu^j)O^\varepsilon)_x-(\mu^jf(I^{j\varepsilon})+(1-\mu^j)f(O^\varepsilon))_x\\
    &-g(\mu^jI^{j\varepsilon}+(1-\mu^j)O^\varepsilon)-(\mu^jg(I^{j\varepsilon})+(1-\mu^j)g(O^\varepsilon)),
  \end{split}
\end{equation*}
and $\mu^j=\mu\left(\frac{x-X_j(t)}{\varepsilon^\gamma}\right)$.

We now want to prove that $u_{app}^\varepsilon$ is a good approximation of $u^\varepsilon$, that means $u^\varepsilon-u_{app}^\varepsilon\to 0$ when $\varepsilon\to0$. Therefore, we define $w^\varepsilon=u^\varepsilon-u_{app}^\varepsilon$ which solves:
\begin{equation}\label{w}w_t+(\dd f(u_{app}^\varepsilon)\cdot w)_x-\varepsilon w_{xx}-\dd g(u_{app}^\varepsilon)\cdot w=-q^\varepsilon+Q_1(u_{app}^\varepsilon,w)-Q_2(u_{app}^\varepsilon,w)_x\end{equation}
with $$Q_1(u_{app}^\varepsilon,w)=g(w+u_{app}^\varepsilon)-g(u_{app}^\varepsilon)-\dd g(u_{app}^\varepsilon)\cdot w$$ and $$Q_2(u_{app}^\varepsilon,w)=f(w+u_{app}^\varepsilon)-f(u_{app}^\varepsilon)-\dd f(u_{app}^\varepsilon)\cdot w$$ which are at least quadratic terms in $w$.

\subsection{Estimates on the error term}\label{ESTIMATESQ}
To end with the proof of Theorem~\ref{thmuapp}, it remains to compute the estimates on the error term $q^\varepsilon$. As in~\cite{GoodmanXin}, we can estimate the support of functions $q_i^\varepsilon$:
\begin{eqnarray*}\supp(q_1^\varepsilon)\subset\{(x,t): |x-X_j(t)|\geq\varepsilon^\gamma\},\\
  \supp(q_2^\varepsilon)\subset\{(x,t): |x-X_j(t)|\leq2\varepsilon^\gamma\},\\
  \supp(q_3^\varepsilon)\subset\{(x,t): \varepsilon^\gamma\leq|x-X_j(t)|\leq2\varepsilon^\gamma\}.
\end{eqnarray*}

To obtain estimates on $q_i^\varepsilon$ and their derivatives, we first need to fix the shocks by a smooth change of variable. This manipulation cannot be avoided for the estimates on $q_{it}^\varepsilon, q_{itt}^\varepsilon$. So, we define 
\begin{equation}
  \label{phi}
  \varphi(z,t)=z+\sum_{j=1}^m\alpha_j(z,t)\left(X_j(t)-(j-1)\frac Lm-\varepsilon\delta^j(t)\right)
\end{equation}
where $\alpha_j(\cdot,t)$ are smooth functions, such that $\sum_j\alpha_j\equiv1$, $\varphi$ is increasing, $\varphi_z>0$ and $\alpha_j(\cdot,t)\equiv1$ on a neighbourhood $[(j-1)\frac Lm-r;(j-1)\frac Lm+r]$ of $(j-1)\frac Lm$. We recall that in assumption~(\textbf{H2}) we have supposed that $|X_{j+1}-X_j|>2r,$ which ensures the existence of such a $\varphi$.

With the notations $$\tilde w(z,t)=w(\varphi(z,t),t), \tilde u_{app}^\varepsilon(z,t)=u_{app}^\varepsilon(\varphi(z,t),t), \tilde q^\varepsilon (z,t)=q^\varepsilon(\varphi(z,t),t),$$ equation~\eqref{w} becomes:
\begin{multline}\label{tw}
  \tilde w_t+\frac1{\varphi_z}(\dd f(\tilde u_{app}^\varepsilon)\cdot \tilde w)_z-\left(\frac{\varphi_t}{\varphi_z}+\frac{\varepsilon \varphi_{zz}}{\varphi_z^3}\right)\tilde w_z-\varepsilon\left(\frac1{\varphi_z^2}\tilde w_{z}\right)_z-\dd g(\tilde u_{app}^\varepsilon)\cdot \tilde w\\
  =-\tilde q^\varepsilon+Q_1(\tilde u_{app}^\varepsilon,\tilde w)-\frac1{\varphi_z}Q_2(\tilde u_{app}^\varepsilon,\tilde w)_z.    
\end{multline}

We now prove estimates~\eqref{Linfinity}-\eqref{L1}-\eqref{qz}-\eqref{qzz}. Using the fact that $f$ and $g$ are smooth and $u_i$ is piecewise smooth, with discontinuities only at $x=X_j(t)$, we have the following estimates for $\tilde q_1^\varepsilon$:
$$\|\tilde q_1^\varepsilon\|_{L^\infty}\leq C\varepsilon^3$$ where $C$ is a constant which does not depend on $\varepsilon$. Integrating this inequality, we get:
$$\|\tilde q_1^\varepsilon\|_{L^1(0;L)}\leq C\varepsilon^3.$$
Moreover, if $\varepsilon$ is small enough (such that $2\varepsilon^\gamma<r$), 
$$\tilde q_{1t}^\varepsilon(z,t)=(1-\tilde \mu^j)[\dots]_t+\varepsilon^{1-\gamma}\delta_t^j\mu'\left(\frac{\varphi(z,t)-X_j(t)}{\varepsilon^\gamma}\right)[\dots]$$
where the $[\dots]$ is dominated by $\varepsilon^3$. Therefore we have 
$$\|\tilde q_{1t}^\varepsilon\|_{L^\infty}\leq C\varepsilon^3, \|\tilde q_{1t}^\varepsilon\|_{L^1(0;L)}\leq C\varepsilon^3.$$
Similarly, we prove the same estimates for $\tilde q_{1tt}^\varepsilon$. Nevertheless we cannot have such estimates for $\tilde q_{1z}^\varepsilon$. Indeed, we have
$$\tilde q_{1z}^\varepsilon(z,t)=(1-\tilde \mu^j)[\dots]_z-\varepsilon^{-\gamma}\mu'\left(\frac{\varphi(z,t)-X_j(t)}{\varepsilon^\gamma}\right)[\dots]$$
so we just have
$$\|\tilde q_{1z}^\varepsilon\|_{L^\infty}\leq C\varepsilon^{3-\gamma},\|\tilde q_{1z}^\varepsilon\|_{L^1(0;L)}\leq C\varepsilon^3.$$
Similarly, we prove 
$$\|\tilde q_{1zt}^\varepsilon\|_{L^\infty}\leq C\varepsilon^{3-\gamma},\|\tilde q_{1zt}^\varepsilon\|_{L^1(0;L)}\leq C\varepsilon^3,$$
$$\|\tilde q_{1zz}^\varepsilon\|_{L^\infty}\leq C\varepsilon^{3-2\gamma},\|\tilde q_{1zz}^\varepsilon\|_{L^1(0;L)}\leq C\varepsilon^{3-\gamma}.$$

Then, we compute estimates for $\tilde q_2^\varepsilon$:
$$\|\tilde q_{2}^\varepsilon\|_{L^\infty},\|\tilde q_{2t}^\varepsilon\|_{L^\infty},\|\tilde q_{2tt}^\varepsilon\|_{L^\infty}\leq C\varepsilon^2,$$ 
$$\|\tilde q_{2}^\varepsilon\|_{L^1(0;L)}, \|\tilde q_{2t}^\varepsilon\|_{L^1(0;L)}, \|\tilde q_{2tt}^\varepsilon\|_{L^1(0;L)}\leq C\varepsilon^{2+\gamma},$$
$$\|\tilde q_{2z}^\varepsilon\|_{L^\infty},\|\tilde q_{2zt}^\varepsilon\|_{L^\infty}\leq C\varepsilon^{2-\gamma},\|\tilde q_{2z}^\varepsilon\|_{L^1(0;L)},\|\tilde q_{2zt}^\varepsilon\|_{L^1(0;L)}\leq C\varepsilon^2.$$	
$$\|\tilde q_{2zz}^\varepsilon\|_{L^\infty}\leq C\varepsilon^{2-2\gamma},\|\tilde q_{2zz}^\varepsilon\|_{L^1(0;L)}\leq C\varepsilon^{2-2\gamma}.$$	

Eventually, we use matching conditions~\eqref{match0}-\eqref{match1}-\eqref{match2} to prove the estimates on $\tilde q_3^\varepsilon$. Indeed, the properties of viscous shock profiles provide that terms $o(1)$ can be replaced by $e^{-\alpha|\xi|}$ in the matching conditions with $\alpha$ a positive number. So, we have for $z-(j-1)\frac Lm<r$:
$$\begin{array}{rcl}
  I^{j\varepsilon}(\varphi(z,t),t)\hspace{-0.3cm}&=&\hspace{-0.2cm}V^j\left(\frac{z-(j-1)\frac Lm}{\varepsilon},t\right)+\varepsilon V_1^j\left(\frac{z-(j-1)\frac Lm}{\varepsilon},t\right)+\varepsilon^2V_2^j\left(\frac{z-(j-1)\frac Lm}{\varepsilon},t\right)\\
  & = &\hspace{-0.2cm} u^{j\pm}(t)+\varepsilon \left[u_1^{j\pm}(t)+ u_x^{j\pm}(t)\left(\frac{z-(j-1)\frac Lm-\varepsilon\delta_0^j}{\varepsilon}\right)\right]+\varepsilon^2\bigg[u_2^{j\pm}(t)\\
  &&\hspace{-0.2cm}+ u_{1x}^{j\pm}(t)\left(\frac{z-(j-1)\frac Lm-\varepsilon\delta_0^j}{\varepsilon}\right)
  +\frac{1}{2}u_{xx}^{j\pm}(t)\left(\frac{z-(j-1)\frac Lm-\varepsilon\delta_0^j}{\varepsilon}\right)^2\!-u_x^{j\pm}(t)\delta_1^j(t)\bigg]\\
  &&\hspace{-0.2cm}+ \mathcal O\left(e^{-\alpha\frac{|z-(j-1)\frac Lm|}{\varepsilon}}\right)\\
\end{array}$$
and, using Taylor expansion for $u_i(X_j+(z+(j-1)\frac Lm-\varepsilon\delta_0^j))$,
$$\begin{array}{rcl}
  O^\varepsilon(\varphi(z,t),t)\!&=&\!u(z+X_j-(j-1)\frac Lm-\varepsilon\delta^j,t)\\
  &&+\varepsilon u_1(z+X_j-(j-1)\frac Lm-\varepsilon\delta^j,t)\\
  &&+\varepsilon^2u_2(z+X_j-(j-1)\frac Lm-\varepsilon\delta^j,t)\\\\
  &=&\!+u^{j\pm}(t)+u_x^{j\pm}(t)(z-(j-1)\frac Lm+\varepsilon\delta_0^j+\varepsilon^2\delta_1^j)\\
  &&+\frac12  u_{xx}^{j\pm}(t)(z-(j-1)\frac Lm-\varepsilon\delta_0^j)^2+\varepsilon u_1^{j\pm}\\
  &&+\varepsilon u_{1x}^{j\pm}(z-(j-1)\frac Lm-\varepsilon\delta_0^j)+\varepsilon^2u_2^{j\pm}\\
  &&+ \mathcal O(\varepsilon^{3}+(z-(j-1)\frac Lm-\varepsilon\delta_0^j)^{3}).
\end{array}$$
Since 
$$\supp(\tilde q_3^\varepsilon)\subset\left\{(z,t): \varepsilon^\gamma\leq\left|z-(j-1)\frac Lm-\varepsilon \delta^j\right|\leq2\varepsilon^\gamma\right\}, $$
we have 
$$\mathcal O\left(\varepsilon^{3}+\left(z-(j-1)\frac Lm-\varepsilon\delta_0^j\right)^{3}\right)=\mathcal O(\varepsilon^{3\gamma})$$
and we obtain the estimates:
$$\|\tilde q_{3}^\varepsilon\|_{L^\infty},\|\tilde q_{3t}^\varepsilon\|_{L^\infty},\|\tilde q_{3tt}^\varepsilon\|_{L^\infty}\leq C\varepsilon^{2\gamma},$$
$$\|\tilde q_{3}^\varepsilon\|_{L^1(0;L)}, \|\tilde q_{3t}^\varepsilon\|_{L^1(0;L)}, \|\tilde q_{3tt}^\varepsilon\|_{L^1(0;L)}\leq C\varepsilon^{3\gamma},$$
$$\|\tilde q_{3z}^\varepsilon\|_{L^\infty},\|\tilde q_{3zt}^\varepsilon\|_{L^\infty}\leq C\varepsilon^{\gamma},\|\tilde q_{3z}^\varepsilon\|_{L^1(0;L)},\|\tilde q_{3zt}^\varepsilon\|_{L^1(0;L)}\leq C\varepsilon^{2\gamma},$$	
$$\|\tilde q_{3zz}^\varepsilon\|_{L^\infty}\leq C,\|\tilde q_{3zz}^\varepsilon\|_{L^1(0;L)}\leq C\varepsilon^{\gamma}.$$	
This ends the proof of Theorem~\ref{thmuapp}.

\section{Estimates on the Green's function}\label{GREEN}
We now consider the linear operator 
\begin{multline*}
  L^\varepsilon \tilde w=\tilde w_t+\frac1{\varphi_z}\left(\dd f(\tilde u_{app}^\varepsilon)-\varphi_t+\varepsilon\frac{\varphi_{zz}}{\varphi_z^2}\right)\cdot\tilde w_z-\varepsilon\frac1{\varphi_z^2}\tilde w_{zz}\\
  +\left(\left(\frac{1}{\varphi_z}\left(\dd f(\tilde u_{app}^\varepsilon)-\varphi_t+\varepsilon\frac{\varphi_{zz}}{\varphi_z^2}\right)\right)_z-\dd g(\tilde u_{app}^\varepsilon)\right) \cdot \tilde w.
\end{multline*}
The aim of this section is to prove the following theorem:
\begin{theorem}\label{thmGreen}
  There exists a Green's function $G^\varepsilon(t,\tau, z, y)$ of the linear operator $L^\varepsilon$ defined for $0\leq\tau,$ $t\leq T^*,z,y\in\mathbb R$ such that $G^\varepsilon(t,\tau,z,y)=0$ if $\tau>t$ and
  \begin{equation}\label{Greenestimate}\sup_{y,\tau\leq T^*}\int_0^{T^*}\int_\mathbb R|G^\varepsilon(t,\tau,z,y)|\dd z\dd t+\sqrt\varepsilon\sup_{y,\tau\leq T^*}\int_0^{T^*}\int_\mathbb R|\partial_zG^\varepsilon(t,\tau,z,y)|\dd z\dd t\leq C\end{equation}
  where $C$ is positive and does not depend on $\varepsilon$.
\end{theorem}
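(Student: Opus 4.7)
I would follow the iterative construction of Grenier--Rousset: build an \emph{approximate} Green's function $G_a^\varepsilon$ by patching together local models, estimate the residual $R^\varepsilon := L^\varepsilon G_a^\varepsilon - \delta_{t=\tau}\delta_{z=y}$ in the norm appearing in~\eqref{Greenestimate}, and then recover the true $G^\varepsilon$ through the Neumann series
\begin{equation*}
G^\varepsilon = G_a^\varepsilon + G_a^\varepsilon \ast R^\varepsilon + G_a^\varepsilon \ast R^\varepsilon \ast R^\varepsilon + \cdots,
\end{equation*}
where $\ast$ denotes space-time convolution. If (i) $G_a^\varepsilon$ itself satisfies~\eqref{Greenestimate} and (ii) the convolution operator induced by $R^\varepsilon$ has operator norm $\le 1/2$ in the same topology, then the series converges and $G^\varepsilon$ inherits the bound via a geometric sum. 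The choice of topology (the $L^1$-in-$(t,z)$ norm of $G$ and its $\sqrt\varepsilon$-weighted $z$-derivative) is exactly the one that is stable under this convolution and that one can realistically bound both above and in the residual, which is why it is chosen in~\eqref{Greenestimate}.

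For (i), I split the strip $[0;T^*]\times\mathbb{R}$ into \emph{shock zones} $|z-(j-1)L/m|\le K\varepsilon^\gamma$ and an \emph{outer zone} via a smooth partition of unity. In each shock zone, after rescaling $\xi=(z-(j-1)L/m)/\varepsilon$, the operator $L^\varepsilon$ agrees to leading order with $\mathcal L_\tau^j$, and assumption~(\textbf{H3}) lets me import the pointwise Green's function description of Zumbrun--Howard~\cite{ZumbrunHoward}: its Gaussian-plus-profile-excited bounds are manifestly integrable in space-time, with the usual parabolic derivative loss $\varepsilon^{-1/2}$ absorbed by the $\sqrt\varepsilon$ weight. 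In the outer zone, $L^\varepsilon$ is a hyperbolic transport operator plus small viscosity, and I construct $G_a^\varepsilon$ by a WKB recipe: diagonalize $df(\tilde u_{app}^\varepsilon)$ into its $n$ characteristic fields $\lambda_i,r_i$, and set
\begin{equation*}
G_a^\varepsilon \sim \sum_{i=1}^n \frac{\Pi_i(z,t)\,M_i(t,\tau,z,y)}{\sqrt{4\pi\varepsilon(t-\tau)}}\,\exp\!\Bigl(-\frac{(z-y-\int_\tau^t \lambda_i\,ds)^2}{4\varepsilon(t-\tau)}\Bigr),
\end{equation*}
where $\Pi_i$ is the projector onto $r_i$ and $M_i$ is a transport factor collecting zeroth-order terms along the $i$-th characteristic. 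Smoothness of $u$ on $[0;T^*]$ and periodicity guarantee that characteristics do not accumulate and that $M_i$ stays bounded, so each summand integrates to $O(1)$ in $dz\,dt$, with a gain of $\sqrt\varepsilon$ under $\partial_z$ from differentiating the Gaussian.

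The residual $R^\varepsilon$ is then concentrated on the overlap strips $\varepsilon^\gamma\le|z-X_j|\le 2\varepsilon^\gamma$ (from the partition of unity) and on the error committed in freezing coefficients inside each local model. In the overlap strip the rescaled variable satisfies $|\xi|=\varepsilon^{\gamma-1}\to\infty$, so the profile decay~\eqref{estprofile1}--\eqref{estprofile2} makes the inner representation exponentially close to the outer Taylor expansion, killing the matching discrepancy, while variable-coefficient errors in each zone contribute positive powers of $\varepsilon^\gamma$ via parabolic averaging. Periodicity of all data and the finite horizon $T^*$ ensure that, despite the infinite train of shocks, it is enough to control one period, preserving uniformity. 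The main obstacle, and the one I would treat with most care, is verifying that the full \emph{time-and-space} $L^1$ norm of $R^\varepsilon$ is indeed $o(1)$ as $\varepsilon\to0$: pointwise bounds carry negative powers $\varepsilon^{-\gamma}$ or $\varepsilon^{-2\gamma}$ from cutoff derivatives, and these only beat integration over a strip of width $\varepsilon^\gamma$ and time interval $O(1)$ if $\gamma$ is chosen in a precise range and if one simultaneously exploits the exponential tails $\exp(-\omega\varepsilon^{\gamma-1})$ of the profile contributions. Once that estimate is in place, the Neumann series converges in the desired norm and~\eqref{Greenestimate} follows, together with the analogous bound on $\sqrt\varepsilon\,\partial_z G^\varepsilon$.
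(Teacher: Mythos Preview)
Your broad architecture---build an approximate kernel from Zumbrun--Howard pieces near each shock and transport--Gaussian pieces in between, then close by a Neumann series---is indeed the Grenier--Rousset scheme the paper follows. But the proposal contains a genuine gap at the decisive step, namely the claim that the residual $R^\varepsilon$ has $L^1_{t,z}$-norm $o(1)$ (or even $\le 1/2$).

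That claim is false for the reason you yourself flag as ``the main obstacle.'' A commutator $[L^\varepsilon,T]S$ coming from a cutoff $T$ at any scale $\delta$ applied to a drifting heat kernel has $L^1_{t,z}$-norm of order~$1$, \emph{independently of $\delta$}: the factor $T'\sim\delta^{-1}$ is compensated exactly by the time $\sim\delta$ that the travelling Gaussian spends in the cutoff strip. Neither the exponential tails of the profile nor the choice of the overlap width $\varepsilon^\gamma$ changes this bookkeeping; the exponential smallness you invoke controls $V^j-u^{j\pm}$ (relevant for matching $u_{app}^\varepsilon$), not the Green's kernels, and in particular does not make the Zumbrun--Howard kernel and the WKB kernel coincide in the overlap. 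So a naive geometric series on a scalar norm of $R^\varepsilon$ cannot close.

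What the paper does instead is refine the decomposition so that the residual, though $O(1)$, has a \emph{contractive matrix structure}. The cutoffs are taken at scale $\varepsilon$ (constants $M_1\ll M_2\ll M_3$), not $\varepsilon^\gamma$; around each shock one introduces eight projectors $\Pi_0^j,\dots,\Pi_7^j$ separating the viscous core, outgoing waves, incoming waves, and the inter-shock transport. The residuals $R_k^j$ are then measured against each $\Pi_l^i$, producing a finite $8m\times 8m$ error matrix $\mathcal M$ (finiteness uses periodicity). Many entries of $\mathcal M$ are indeed bounded only by constants $C$, but after tuning $M_1,M_2,M_3$ and taking $T$ small, $\mathcal M$ is dominated by a matrix whose eigenvalues are $0$ and $2\alpha$ with $\alpha<1/2$; hence $\mathcal M^p\to0$ and the iterative construction converges. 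The essential point you are missing is that the large commutator errors are ``off-diagonal'' in this projector basis (an outgoing wave created at shock $j$ can only feed certain incoming channels), and it is this combinatorial/spectral fact---not smallness of the total residual---that makes the scheme work.
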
	

To find estimates on this Green's function, we use approximations of the Green's function both near the shock and far away from the shocks. 

First, we recall the method of iterative construction of the Green's function of E.~Grenier and F.~Rousset~\cite{GrenierRousset}. Then, to approximate the Green's function near the shocks, we recall the result of K.~Zumbrun and P.~Howard~\cite{ZumbrunHoward} about Green's function for pure viscous profile problem. Far away from the shocks, we use characteristic curves to build some approximate Green's functions. Finally, we combine all these Green's functions to obtain an approximate Green's function of $L^\varepsilon$ and we find bounds on the error terms. 

\subsection{Method}\label{METHODGREEN} 
Here, we recall the method used by E.~Grenier and F.~Rousset in~\cite{GrenierRousset}. We want to construct an approximate Green's function $G_{app}^\varepsilon$ of $L^\varepsilon$ in the form
$$G_{app}^{\varepsilon}(t,\tau,z,y)=\sum_{k=1}^NS_k(t,\tau,z,y)\Pi_k(\tau,y),$$
where $S_k$ are Green's kernels which satisfy~\eqref{Greenestimate} and $\Pi_k\in\mathcal C^\infty([0, T^*]\times\mathbb R,\mathcal L(\mathbb R^n))$ are such that $$\|\Pi_k(t,x)v\|\leq C\|v\|,\quad\forall x\geq0,t\in[0;T^*],v\in\mathbb R^n$$ and $$\sum_{k=1}^N\Pi_k=\Id.$$

We next define the error $R_k(\cdot,\tau,\cdot,y)=L^\varepsilon S_k(\tau,y)$ for $k=1,\dots,N$, and the matrix of errors: $\mathcal M(T_1,T_2)=(\sigma_{kl}(T_1,T_2))_{1\leq k,l\leq N}$ with
$$\sigma_{kl}(T_1,T_2)=\sup_{T_1\leq \tau\leq T_2,y\in \supp\Pi_l}\int_{T_1}^{T_2}\int_\mathbb R|\Pi_k(t,z)R_l(t,\tau,z,y)|\dd z\dd t.$$

Thanks to Theorem 2.2 of~\cite {GrenierRousset}, we just have to prove that there exists $\varepsilon_2>0$ such that $0<T_2-T_1<\varepsilon_2$ implies $$\lim_{p\to\infty}\mathcal M^p(T_1,T_2)=0.$$
\begin{remarkk}
  \em Since we consider the error in a matrix, we need to consider a finite number of Green's kernel $S_k$. We will see in the following that the number of these kernels is proportional to the number of shocks per period. So, this method does not allow us to treat the case of a non-periodic perturbation. 
\end{remarkk}
 
\subsection{Near the shocks}
Near the shock $j$, we can approximate 
$$\frac1{\varphi_z}\left(\dd f(\tilde u_{app}^\varepsilon)-\varphi_t+\varepsilon\frac{\varphi_{zz}}{\varphi_z^2}\right)$$ 
by 
$$\dd f\left(V^j\left(\frac{z-(j-1)\frac Lm}{\varepsilon},\tau\right)\!\right)-X_j'$$ 
and we forget zeroth order term. Therefore, we search the Green's functions for the linear operators
$$L^{\varepsilon j}_\tau w=\partial_t w+\left(\dd f\left(V^j\left(\frac{z-(j-1)\frac Lm}{\varepsilon},\tau\right)\right)-X_j'(\tau)\right)w_z-\varepsilon w_{zz}$$
which depend on $j$ and $\tau<T^*$. As in~\cite{Rousset}, we remark that these Green's functions $G_\tau^{Sj}(t,z,y)$ verify $$G_\tau^{Sj}(t,z,y)=\frac1\varepsilon G_\tau^{HZj}\left(\frac t\varepsilon,\frac{z-(j-1)\frac Lm}{\varepsilon},\frac{y-(j-1)\frac Lm}{\varepsilon}\right)$$ where $G_\tau^{HZj}$ is the Green's function related to the operator 
$$L^j_\tau w=\partial_t w+(\dd f(V^j(z,\tau))-X_j'(\tau))w_z-w_{zz}.$$
In~\cite{ZumbrunHoward}, K.~Zumbrun and P.~Howard obtained estimates on the Green's functions which will be useful to obtain estimates for our operators. Let us denote by $\tilde a_i^{j\pm}(\tau),$ and $ r_i^{j\pm}(\tau)$ the eigenvalues and the associated eigenvectors of $\dd f(u^{j\pm}(\tau))-X_j'(\tau)$.
\begin{proposition}\label{propZumbrunHoward}
  Under hypothesis {\bf(H3)}, we have 
  \begin{equation}
    \begin{split}
      G_\tau^{Sj}\!\left(\!t,z+(j-1)\frac Lm,y+(j-1)\frac Lm\!\right)\!=\!&{\sum_{i,\tilde a_i^{j+}(\tau)>0}\hspace{-0.2cm}\mathcal O\!\left(\frac{\exp\!\left(-\frac{(z-\tilde a_i^{j+}(\tau)t)^2}{M\varepsilon t}\right)\!}{\sqrt{\varepsilon t}}\right)\!r_i^{j+}(\tau)\chi_{z\geq0}}\\
      &\quad+{\hspace{-0.4cm}\sum_{i,\tilde a_i^{j-}(\tau)<0}\hspace{-0.3cm}\mathcal O\!\left(\frac{\exp\!\left(-\frac{(z-\tilde a_i^{j-}(\tau)t)^2}{M\varepsilon t}\right)\!}{\sqrt{\varepsilon t}}\right)\!r_i^{j-}(\tau)\chi_{z\leq0}}\\
      &\qquad+\mathcal O\left(\frac{\exp\left(-\frac{(z-y)^2}{M\varepsilon t}\right)}{\sqrt{\varepsilon t}}e^{-\sigma\frac t\varepsilon}\right)
    \end{split}
  \end{equation}
  \begin{equation}
    \begin{split}
      \partial_zG_\tau^{Sj}\!\left(\!t,z+(j-1)\frac Lm,y+(j-1)\frac Lm\!\right)\!=\!&{\sum_{i,\tilde a_i^{j+}(\tau)>0}\hspace{-0.2cm}\mathcal O\!\left(\frac{\exp\!\left(-\frac{(z-\tilde a_i^{j+}(\tau)t)^2}{M\varepsilon t}\right)\!}{\varepsilon t}\right)\!r_i^{j+}(\tau)\chi_{z\geq0}}\\
      &\ {+\hspace{-0.4cm}\sum_{i,\tilde a_i^{j-}(\tau)<0}\hspace{-0.3cm}\mathcal O\!\left(\frac{\exp\!\left(-\frac{(z-\tilde a_i^{j-}(\tau)t)^2}{M\varepsilon t}\right)\!}{\varepsilon t}\right)\!r_i^{j-}(\tau)\chi_{z\leq0}}\\
      &\quad+\mathcal O\left(\frac{\exp\left(-\frac{(z-y)^2}{M\varepsilon t}\right)}{\varepsilon t}e^{-\sigma\frac t\varepsilon}\right),
    \end{split}
  \end{equation}
  where $M$ and $\sigma$ are positive constants, and $\chi$ designs characteristic function. Moreover, $\mathcal O$'s are at least linear forms, locally bounded in $y$ and uniformly bounded in $(t,z)$.	
\end{proposition}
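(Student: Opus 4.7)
The approach is a direct rescaling that reduces Proposition~\ref{propZumbrunHoward} to the Green's function estimates of K.~Zumbrun and P.~Howard~\cite{ZumbrunHoward} for a single Lax viscous shock with unit viscosity, applied at a frozen time $\tau$. The plan has three ingredients: justify the scaling relation between $G_\tau^{Sj}$ and $G_\tau^{HZj}$, invoke the Zumbrun--Howard decomposition pointwise in $\tau$, and unwind the scaling to recover the claimed Gaussian bounds.

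First I would verify the scaling identity stated just before the proposition. Observe that if $w(t,z)$ solves $L_\tau^{\varepsilon j} w = 0$, then $\tilde w(s,\zeta) := w(\varepsilon s,\varepsilon \zeta + (j-1)L/m)$ solves $L_\tau^{j}\tilde w = 0$. Pushing this through the Dirac initial data, together with $\delta(\varepsilon\cdot) = \varepsilon^{-1}\delta(\cdot)$, yields
$$G_\tau^{Sj}(t,z,y) = \frac{1}{\varepsilon}\,G_\tau^{HZj}\!\left(\frac{t}{\varepsilon},\frac{z-(j-1)\frac{L}{m}}{\varepsilon},\frac{y-(j-1)\frac{L}{m}}{\varepsilon}\right).$$
Under this identity, the Gaussian centred on the ray $\zeta=\tilde a_i^{j\pm}(\tau)s$ with width $\sqrt{Ms}$ becomes a Gaussian in $z$ centred on $(j-1)L/m+\tilde a_i^{j\pm}(\tau)t$ with width $\sqrt{M\varepsilon t}$, while the $s^{-1/2}$ prefactor combined with the outer $\varepsilon^{-1}$ produces exactly $(\varepsilon t)^{-1/2}$. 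The derivative estimate then follows from $\partial_z = \varepsilon^{-1}\partial_\zeta$, which upgrades the prefactor to $(\varepsilon t)^{-1}$.

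Next, at each fixed $\tau\in[0,T^*]$ I would apply the main Green's function theorem of~\cite{ZumbrunHoward} to $L^j_\tau$. This is precisely the scalar-viscosity setting they analyse: a single (non-characteristic) Lax viscous profile $V^j(\cdot,\tau)$ connecting the hyperbolic end states $u^{j\pm}(\tau)$, with characteristic speeds $\tilde a_i^{j\pm}(\tau)$ and eigenvectors $r_i^{j\pm}(\tau)$. Their theorem decomposes $G_\tau^{HZj}$ into one Gaussian scattering mode per outgoing characteristic $\tilde a_i^{j\pm}(\tau)$ — carried by $r_i^{j\pm}(\tau)$ and supported in the half-line $\{\zeta\geq 0\}$ or $\{\zeta\leq 0\}$ according to sign — plus a parabolic residual centred on $\zeta=\eta$ with exponential-in-$s$ decay, the rate $\sigma>0$ being controlled by the spectral gap of $L^j_\tau$ away from the essential spectrum. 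The spectral input needed is exactly that $D_\tau^j(\lambda)\neq 0$ for $\Re\lambda\geq 0,\lambda\neq 0$ with a simple zero at the origin, i.e.~assumption~\textbf{(H3)}.

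The main obstacle in the plan is obtaining the estimates uniformly in $\tau\in[0,T^*]$, since the Zumbrun--Howard bounds are stated at a fixed linearised operator. The smoothness of $V^j$ in $\tau$, together with the continuity of the Evans function in its coefficients, implies that $\tau\mapsto (D_\tau^j,{D_\tau^j}')$ is continuous in an appropriate topology; combined with the pointwise non-degeneracy in~\textbf{(H3)} and the compactness of $[0,T^*]$, this yields uniform control on the spectral gap $\sigma$, the Gaussian width $M$ and the constants hidden in the $\mathcal O$-symbols. Once this uniform version of~\cite{ZumbrunHoward} is in place, the final statement follows by substituting back the scaling; the local-in-$y$ linearity claim for the $\mathcal O$-coefficients reflects that they arise from the scattering matrix entries of the Zumbrun--Howard construction, which are bounded linear forms on $\mathbb R^n$ whose norms depend only on bounded data.
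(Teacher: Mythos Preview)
Your approach is correct and matches the paper's: the paper does not give a standalone proof of this proposition but simply records the scaling identity $G_\tau^{Sj}(t,z,y)=\varepsilon^{-1}G_\tau^{HZj}(t/\varepsilon,(z-(j-1)L/m)/\varepsilon,(y-(j-1)L/m)/\varepsilon)$ and cites~\cite{ZumbrunHoward} for the unit-viscosity estimates. Your write-up is in fact more thorough than the paper's, since you make explicit the compactness argument in $\tau\in[0,T^*]$ needed for uniformity of $M,\sigma$ and the $\mathcal O$-constants, a point the paper leaves implicit.
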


\subsection{Far away from the shocks}
As in the previous section, we do not search Green's function for $L^\varepsilon$ but for the approximate operator $\tilde L^\varepsilon$ defined by
$$\tilde L^\varepsilon w= w_t+\frac1{\varphi_z}\left(\dd f(u(\varphi(z,t),t))-\varphi_t+\varepsilon\frac{\varphi_{zz}}{\varphi_z^2}\right)\cdot w_z-\frac\varepsilon {\varphi_z^2}w_{zz}.$$
We remark that, as in the previous section, we forget the terms in $w$.

Recall that $\dd f(u(x,t))$ is diagonalizable for all $x,t$, so we can write 
$$\dd f(u(\varphi(z,t),t))=P(u(\varphi(z,t),t))D(u(\varphi(z,t),t))P(u(\varphi(z,t),t))^{-1}$$
with $D(u(\varphi(z,t),t))=\diag(\lambda_i(u(\varphi(z,t),t)))$.

To obtain approximation of the Green's function between two shocks, we define $j$ approximate problems on $\mathbb R$, with continuous solutions. First, we set
$$\lambda_i^j(\varphi(z,t),t)=\left\{
  \begin{array}{ll}
    \lambda_i(u(X_j^+(t),t))&\text{if }z\in\left]-\infty;(j-1)\frac Lm+\varepsilon\delta^j\right],\\
    \lambda_i(u(\varphi(z,t),t))&\text{if }z\in\left](j-1)\frac Lm+\varepsilon\delta^j;j\frac Lm+\varepsilon\delta^{j+1}\right[,\\
    \lambda_i(u(X_{j+1}^-(t),t))&\text{if }z\in\left[j\frac Lm+\varepsilon\delta^{j+1};+\infty\right[.
  \end{array}
\right.$$
Then, we want to find approximate Green's functions for the scalar operators 
$$L_i^jw=w_t+\frac1{\varphi_z}\left(\lambda_i^j-\varphi_t+\varepsilon\frac{\varphi_{zz}}{\varphi_z^2}\right)w_z-\frac\varepsilon{\varphi_z^2}w_{zz}.$$
To do so, we define characteristic curves $\chi_i^j(t,\tau,y)$ by
\begin{equation*}
  \begin{cases}
    &\partial_t\chi_i^j(t,\tau,y)=\lambda_i^j(\chi_i^j(t,\tau,y),t),\quad t\geq\tau, \\
    &\chi_i^j(\tau,\tau,y)=y,    
  \end{cases}
\end{equation*}
and the approximate Green's functions $$G_i^j(t,\tau,z,y)=\frac{\varphi_z(y,\tau)}{\sqrt{4\pi\varepsilon(t-\tau)}}\exp\left(-\frac{(\varphi(z,t)-\chi_i^j(t,\tau,\varphi(y,\tau)))^2}{ 4\varepsilon(t-\tau)}\right).$$
We easily compute the error committed here 
$$L_i^jG_i^j=(\lambda_i^j(\varphi(z,t),t)-\lambda_i^j(\chi_i^j(t,\tau,\varphi(y,\tau)),t))G_{iz}^j(t,\tau,z,y).$$

Before we build the whole Green's function, we introduce some notations.
First, we write $$G^j=\diag(G_i^j).$$
In the sequel, we need to distinguish at each shock the outgoing waves to the incoming waves.  We define 
$$\begin{array}{lcll}
  D^{-in}   	& = & \diag(0,\dots,0,1,\dots,1), 	& \text{with }k-1\text{ unit coefficients,}\\ 
  D^{-out} 	& = & \diag(1,\dots,1,0,\dots,0), &\text{with }k-1\text{ null coefficients,}\\ 
  D^{+in}	& = & \diag(1,\dots,1,0,\dots,0), &\text{with }k\text{ unit coefficients,}\\ 
  D^{+out}	& = & \diag(0,\dots,0,1,\dots,1), &\text{with }k\text{ null coefficients,}
\end{array}$$
so that $D^{\pm in}+D^{\pm out}=\Id$.

Finally, we define the projections 
\begin{eqnarray*}&\mathcal P^{\pm in}(t,z)=P(t,z)D^{\pm in}P(t,z)^{-1},\\&\mathcal P^{\pm out}(t,z)=P(t,z)D^{\pm out}P(t,z)^{-1}.\end{eqnarray*}

\subsection{Approximate Green's function}

Since the shocks are non-characteristic Lax shocks, we have the following inequality on a neighbourhood of each shock $j=1,\dots,m$:
$$|\lambda_i(\tilde u(z,t))-X_j(t)|>C>0 \text{ if } (j-1)\frac Lm-4\eta<z<(j-1)\frac Lm+ 4\eta, i=1,\dots,n.$$ 
We can assume that $\eta$ is such that $4\eta<r$ so that $\alpha_j\equiv 1$ in~(\ref{phi}) on the previous neighbourhood.

Furthermore, we need some cut-off smooth functions
\begin{eqnarray*}
  K^+(z)=\left\{\begin{array}{ll}0\text{ if }z\leq1,\\1\text{ if }z\geq2\end{array}\right.\text{ and }
  K^-(z)=\left\{\begin{array}{ll}1\text{ if }z\leq-2,\\0\text{ if }z\geq-1.\end{array}\right.
\end{eqnarray*}
We also assume the cut-off function $\mu$ already used to read as $\mu=(1-K^+)(1-K^-).$ We can now build an approximate Green's function in the form $$G_{app}^\varepsilon(t,\tau,z,y)=\sum_{j=1}^m\sum_{k=0}^7S_k^j(t,\tau,z,y)\Pi_k^j(\tau,y)$$
where the Green's kernels are periodic with period $(0,0,L,L)$:
$$S_k^j(t,\tau,z,y)=\sum_{l\in\mathbb Z}\tilde S_k^j(t,\tau,z+lL,y+lL)$$
with
\begin{alignat*}{1}
  &\tilde S_0^j(t,\tau,z,y)=\mu\left(\frac {z-(j-1)\frac Lm}{2\eta}\right)\mu\left(\frac {z-(j-1)\frac Lm}{M_3\varepsilon}\right)G_\tau^{Sj}(t-\tau,z,y),\\
  &\tilde S_{1,2}^j(t,\tau,z,y)=\mu\left(\frac {\dots}{2\eta}\right)K^+\left(\frac{z-(j-1)\frac Lm}{M_1\varepsilon}\right)P(t,z)D^{+out}G^j(t,\tau,z,y)P(\tau,y)^{-1},\\
  &\tilde S_3^j(t,\tau,z,y)=\mu\left(\frac {\dots}{2\eta}\right)K^+\left(\frac{z-(j-1)\frac Lm}{M_1\varepsilon}\right)P(t,z)D^{+in}G^j(t,\tau,z,y)P(\tau,y)^{-1},\\
  &\tilde S_4^j(t,\tau,z,y)=\left(\hspace{-0.1cm}K^+\hspace{-0.1cm}\left(\hspace{-0.1cm}\frac{4(z-(j-1)\frac Lm)}{\eta}\hspace{-0.1cm}\right)\hspace{-0.1cm}+K^-\left(\hspace{-0.1cm}\frac{4(z-j\frac Lm)}{\eta}\hspace{-0.1cm}\right)\hspace{-0.1cm}-1\hspace{-0.1cm}\right)P(t,z)G^jP(\tau,y)^{-1},\\
  &\tilde S_5^j(t,\tau,z,y)=\mu\left(\frac {z-j\frac Lm}{2\eta}\right)K^-\left(\frac{z-j\frac Lm}{M_1\varepsilon}\right)P(t,z)D^{-in}G^j(t,\tau,z,y)P(\tau,y)^{-1},\\
  &\tilde S_{6,7}^j(t,\tau,z,y)=\mu\left(\frac {\dots}{2\eta}\right)K^-\left(\frac{z-j\frac Lm}{M_1\varepsilon}\right)P(t,z)D^{-out}G^j(t,\tau,z,y)P(\tau,y)^{-1},
\end{alignat*}
and the projectors are also periodic:
$$\Pi_k^j(\tau,y)=\sum_{l\in\mathbb Z}\tilde \Pi_k^j(\tau,y+lL)$$
with
\begin{alignat*}{1}
  &\tilde \Pi_0^j(\tau,y)=\mu\left(\frac {y-(j-1)\frac Lm}\eta\right)\mu\left(\frac{y-(j-1)\frac Lm}{M_2\varepsilon}\right),\\
  &\tilde \Pi_1^j(\tau,y)=\mu\left(\frac {\dots}\eta\right)K^+\left(\frac{y-(j-1)\frac Lm}{M_2\varepsilon}\right)\hspace{-0.2cm}\left(1-K^+\hspace{-0.1cm}\left(\frac{2(y-(j-1)\frac Lm)}{M_3\varepsilon}\right)\hspace{-0.2cm}\right)\hspace{-0.1cm}\mathcal P^{+out}(\tau,y),\\
  &\tilde \Pi_2^j(\tau,y)=\mu\left(\frac {\dots}\eta\right)K^+\left(\frac{2(y-(j-1)\frac Lm)}{M_3\varepsilon}\right)\mathcal P^{+out}(\tau,y),\\
  &\tilde \Pi_3^j(\tau,y)=\mu\left(\frac {\dots}\eta\right)K^+\left(\frac{y-(j-1)\frac Lm}{M_2\varepsilon}\right)\mathcal P^{+in}(\tau,y),\\
  &\tilde \Pi_4^j(\tau,y)=K^+\left(\frac{y-(j-1)\frac Lm}{\eta}\right)+K^-\left(\frac{y-j\frac Lm}{\eta}\right)-1,\\
  &\tilde \Pi_5^j(\tau,y)=\mu\left(\frac{y-j\frac Lm}\eta\right)K^-\left(\frac{y-j\frac Lm}{M_2\varepsilon}\right)\mathcal P^{-in}(\tau,y),\\
  &\tilde \Pi_6^j(\tau,y)=\mu\left(\frac{\dots}\eta\right)K^-\left(\frac{2(y-j\frac Lm)}{M_3\varepsilon}\right)\mathcal P^{-out}(\tau,y),\\
  &\tilde \Pi_7^j(\tau,y)=\mu\left(\frac{\dots}\eta\right)K^-\left(\frac{y-j\frac Lm}{M_2\varepsilon}\right)\left(1-K^-\left(\frac{2(y-j\frac Lm)}{M_3\varepsilon}\right)\right)\mathcal P^{-out}(\tau,y)).
\end{alignat*}

It appears that all the Green's kernels can be written in the following form:
\begin{equation}\label{TS}S_k^j(t,\tau,z,y)=T(z)S(t,\tau,z,y)\end{equation}
where $T$ is a truncation function. 

We will choose the three constants $M_1, M_2, M_3$ at the end of the estimates on the error matrix so that they verify 
$$4M_1\leq M_2\leq \frac 14M_3.$$
These inequalities are necessary to have $$\sum_{j,k}\Pi_k^j\equiv1$$
and $$G^{app}(\tau,\tau,z,y)=\delta_y(z)\Id.$$

Under these notations, $\tilde S_0^j$ describes the viscous dynamic at the shock $j$, $\tilde S_1^j$ the creation of outgoing waves in a vicinity at the right of the shock $j$, $\tilde S_2^j$ the creation and propagation of outgoing waves away from the shock $j$, at its right, $\tilde S_3^j$ the creation and propagation of incoming waves at the right of the shock $j$. $\tilde S_4^j$ describes the propagation of the waves between the shocks $j$ and $j+1$ Moreover, the kernels $\tilde S_7^j, \tilde S_6^j,\tilde S_5^j$ are the symmetric of respectively $\tilde S_1^j,\tilde S_2^j,\tilde  S_3^j$ for the left of the shock $j+1$. We summarize this splitting in Figure~\ref{schemaS}.
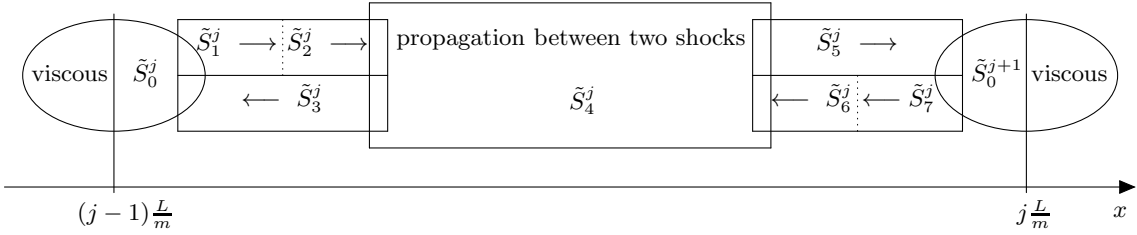
\begin{figure}[ht]
  \begin{center}
    \fontsize{9}{12}\selectfont
    \begin{tikzpicture}[line cap=round,line join=round,>=triangle 45,x=12cm,y=7.4cm]
\clip(-0.12,-0.29) rectangle (1.12,0.135);

\draw[->] (-0.12,-0.2) -- (1.12,-0.2);
\draw (1.12,-0.27) node [anchor=south east] {$x$};

\draw (0,-0.21)-- (0,0.11);
\draw (-0.05,-0.29) node [anchor=south west] {$(j-1)\frac Lm$};
\draw (1,-0.21)-- (1,0.11);
\draw (0.98,-0.29) node [anchor=south west] {$j\frac Lm$};

\draw [rotate around={0:(0,0)}] (0,0) ellipse (0.1 and 0.1);
\draw (-0.1,0.045) node[anchor=north west] {viscous~~ $\tilde S_0^j$};
\draw [rotate around={180:(1,-0)}] (1,-0) ellipse (0.1 and 0.1);
\draw (0.93,0.045) node[anchor=north west] {$\tilde S_0^{j+1}$~viscous};

\draw (0.07,-0.1)-- (0.07,0.1);
\draw (0.07,-0.1)-- (0.3,-0.1);
\draw (0.07,0.1)-- (0.3,0.1);
\draw (0.3,-0.1)-- (0.3,0.1);
\draw (0.07,0)-- (0.3,0);
\draw [line width = 0.3pt,dotted] (0.185,0)-- (0.185,0.1);
\draw (0.08,0.1) node[anchor=north west] {\parbox[0.8cm]{1.5 cm}{$\tilde S_1^j\ \longrightarrow$}};
\draw (0.18,0.1) node[anchor=north west] {\parbox[0.8cm]{1.5 cm}{$\tilde S_2^j\ \longrightarrow$}};
\draw (0.13,0) node[anchor=north west] {\parbox[0.1cm]{2.6 cm}{$\longleftarrow\ \tilde S_3^j$}};

\draw (0.28,-0.13)-- (0.28, 0.13);
\draw (0.28,-0.13)-- (0.72,-0.13);
\draw (0.28, 0.13)-- (0.72, 0.13);
\draw (0.72,-0.13)-- (0.72, 0.13);
\draw (0.3,0.1) node[anchor=north west] {\parbox{4.9 cm}{propagation between two shocks \newline$$\tilde S_4^j$$}};

\draw (0.93,-0.1)-- (0.93,0.1);
\draw (0.93,-0.1)-- (0.7,-0.1);
\draw (0.93,0.1)-- (0.7,0.1);
\draw (0.7,-0.1)-- (0.7,0.1);
\draw (0.93,0)-- (0.7,0);
\draw [line width = 0.3pt,dotted] (0.815,-0.1)-- (0.815,0);
\draw (0.76,0.1) node[anchor=north west] {\parbox[0.1cm]{2.6 cm}{$\tilde S_5^j\ \longrightarrow$}};
\draw (0.71,0) node[anchor=north west] {\parbox[0.8cm]{1.5 cm}{$\longleftarrow\ \tilde S_6^j$}};
\draw (0.81,0) node[anchor=north west] {\parbox[0.8cm]{1.5 cm}{$\longleftarrow\tilde S_7^j$}};

\end{tikzpicture}
    \caption{Summarize of the splitting by Green's kernels.}
    \label{schemaS}
  \end{center}
\end{figure}

\subsection{Bounds on the error matrix}

As said in Subsection~\ref{METHODGREEN}, to prove Theorem~\ref{thmGreen}, it remains to prove that $\mathcal M^p$ converges to $0$ when $p$ goes to $\infty$. Since the coefficients of $\mathcal M$ are non-negative, it suffices to prove that $\mathcal M$ is bounded above  by an other matrix which has the ``good'' convergence.

To bound the error terms, we use the same method than F.~Rousset in~\cite{Rousset}. We split all the error terms into two parts: the truncation of the error on the kernel and the commutator: $$R_k^j=E_{k1}^j+E_{k2}^j$$
where, with the notation of~(\ref{TS}),
$$E_{k1}^j(t,\tau,z,y)=T(z) L^\varepsilon S(t,\tau,z,y) \text{ and }E_{2k}^j(t,\tau,z,y)=[L^\varepsilon,T(z)]S(t,\tau,z,y).$$

\begin{lemma}\label{lemmaerror}We have the estimates
  \begin{description}
  \item[\emph{at shock $j$:} $\tilde R_0^j$]~\\
    \strut\hspace{-0.8cm}$\|\mathbf1_{|y-(j-1)L/m|\leq 2M_2\varepsilon}E_{01}^j(t,\tau,z,y)\|_{L_{\tau,y}^\infty,L_{t,z}^1}\leq C_1(T+\varepsilon)$,\\
    \strut\hspace{-0.8cm}$\|\mathbf1_{|y-(j-1)L/m|\leq 2M_2\varepsilon}\mathbf1_{\pm z\geq0}\mathcal P_{out}^\pm E_{02}^j(t,\tau,z,y)\|_{L_{\tau,y}^\infty,L_{t,z}^1}\leq C_2$,\\
    \strut\hspace{-0.8cm}$\|\mathbf1_{|y-(j-1)L/m|\leq 2M_2\varepsilon}\mathbf1_{\pm z\geq0}\mathcal P_{in}^\pm E_{02}^j(t,\tau,z,y)\|_{L_{\tau,y}^\infty,L_{t,z}^1}\leq C_3+C_2T$;\\
    \raggedright\item[\emph{for the outgoing waves:} $\tilde R_1^j,\tilde R_2^j,\tilde R_6^j,\tilde R_7^j$]. Let $M\geq M_2$. we have: \\
    \strut\hspace{-0.8cm}$\|\mathbf1_{y-(j-1)L/m\geq M\varepsilon}E_{11}^j(t,\tau,z,y)\|_{L_{\tau,y}^\infty,L_{t,z}^1},\|\mathbf1_{y\geq M\varepsilon}E_{21}^j(t,\tau,z,y)\|_{L_{\tau,y}^\infty,L_{t,z}^1}\!$\\\raggedleft{$\leq\! C_4(T+\varepsilon^{2\gamma-1})+C_5$,}\\
    \raggedright\strut\hspace{-0.8cm}$\|\mathbf1_{y-(j-1)L/m\leq-M\varepsilon}E_{61}^j(t,\tau,z,y)\|_{L_{\tau,y}^\infty,L_{t,z}^1},\|\mathbf1_{y\leq-M\varepsilon}E_{71}^j(t,\tau,z,y)\|_{L_{\tau,y}^\infty,L_{t,z}^1}$\\\raggedleft$\leq\! C_4(T+\varepsilon^{2\gamma-1})+C_5$,\\
    \raggedright\strut\hspace{-0.8cm}$\|\mathbf1_{y-(j-1)L/m\geq M\varepsilon}E_{12}^j(t,\tau,z,y)\|_{L_{\tau,y}^\infty,L_{t,z}^1},\|\mathbf1_{y\geq M\varepsilon}E_{22}^j(t,\tau,z,y)\|_{L_{\tau,y}^\infty,L_{t,z}^1}\!\leq \!C_5+C_{1}(T+\varepsilon)$,\\
    \raggedright\strut\hspace{-0.8cm}$\|\mathbf1_{y-(j-1)L/m\leq- M\varepsilon}E_{62}^j(t,\tau,z,y)\|_{L_{\tau,y}^\infty,L_{t,z}^1},\|\mathbf1_{y\leq- M\varepsilon}E_{72}^j(t,\tau,z,y)\|_{L_{\tau,y}^\infty,L_{t,z}^1}$\\\raggedleft$\leq\! C_5+C_{1}(T+\varepsilon)$;\\
  \raggedright\item[\emph{for the incoming waves:} $\tilde R_3^j,\tilde R_5^j$]~\\
    \strut\hspace{-0.8cm}$\|\mathbf1_{y-(j-1)L/m\geq M\varepsilon}E_{31}^j(t,\tau,z,y)\|_{L_{\tau,y}^\infty,L_{t,z}^1},\|\mathbf1_{y\leq-M\varepsilon}E_{51}^j(t,\tau,z,y)\|_{L_{\tau,y}^\infty,L_{t,z}^1}$\\\raggedleft$\leq C_6(T+\varepsilon^{2\gamma-1})+C_7$,\\
    \raggedright\strut\hspace{-0.8cm}$\|\mathbf1_{y-(j-1)L/m\geq M\varepsilon}E_{32}^j(t,\tau,z,y)\|_{L_{\tau,y}^\infty,L_{t,z}^1},\|\mathbf1_{y\leq- M\varepsilon}E_{52}^j(t,\tau,z,y)\|_{L_{\tau,y}^\infty,L_{t,z}^1}$\\\raggedleft$\leq C_8+C_{1}(T+\varepsilon)$;\\
  \raggedright\item[\emph{between two shocks:} $\tilde R_4^j$]~\\
    \strut\hspace{-0.8cm}$\|R_{4}^j\|_{L_{\tau,y}^\infty,L_{t,z}^1},\leq C_8(T+\varepsilon) $,
  \end{description}
  where $C_1$ is locally bounded in $M_2,M_3$,$C_2$ is locally bounded in $M_2$ uniformly in $M_3$, $C_3$ depends only on $M_2$ and $M_3$ and goes to $0$ as $M_3\to+\infty$, $C_4$ is independent of $M_1,M_2$ and $M_3$, $C_5$ depends only on $M$ and goes to $0$ as $M\to+\infty$, $C_6$ is locally bounded in $M_1$, $C_7$ goes to $0$ as $M_1\to+\infty$, and $C_8$ is bounded uniformly in $M_1$.
\end{lemma}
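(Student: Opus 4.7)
The plan is to handle each of the thirteen error estimates case by case, using the common decomposition $R_k^j=E_{k1}^j+E_{k2}^j$. For the truncation part $E_{k1}^j=T(z)L^\varepsilon S$, I would write $L^\varepsilon S=(L^\varepsilon-L_{\mathrm{approx}})S+L_{\mathrm{approx}}S$, where $L_{\mathrm{approx}}$ is the operator for which the underlying kernel $S$ is actually a Green's function (the operator $L^{\varepsilon j}_\tau$ of Proposition~\ref{propZumbrunHoward} in the shock kernels, and the scalar operators $L_i^j$ in the convective kernels). By construction $L_{\mathrm{approx}}S=0$ for $S_0^j$, and $L_i^jG_i^j=(\lambda_i^j(\varphi(z,t),t)-\lambda_i^j(\chi_i^j(t,\tau,\varphi(y,\tau)),t))G_{iz}^j$ in the other cases. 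The difference $L^\varepsilon-L_{\mathrm{approx}}$ is controlled by Taylor expansion of $\dd f(\tilde u_{app}^\varepsilon)$ and $\varphi_z$, together with the zeroth-order terms, and is uniformly bounded with an extra smallness factor of order $\varepsilon^{2\gamma-1}$ coming from the fact that $\|\partial_z\tilde u_{app}^\varepsilon\|\sim\varepsilon^{-1}$ in the shock zone while the zone has width $\varepsilon^\gamma$.

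For the near-shock estimates on $\tilde R_0^j$, I would insert the Zumbrun-Howard representation of $G_\tau^{Sj}$ from Proposition~\ref{propZumbrunHoward}. The bulk Gaussian $(\varepsilon t)^{-1/2}\exp(-(z-y)^2/M\varepsilon t)\,e^{-\sigma t/\varepsilon}$ integrates in $L^1_{t,z}$ to a quantity of size $\varepsilon$, while each scattered Gaussian $(\varepsilon t)^{-1/2}\exp(-(z-\tilde a_i^{j\pm}t)^2/M\varepsilon t)$ integrates in $z$ to a constant in time and hence to $T$ in time. After multiplication by the prefactor $\dd f(\tilde u_{app}^\varepsilon)-\dd f(V^j)+O(\varepsilon)$ which is $O(\varepsilon)$ on $|y-(j-1)L/m|\le 2M_2\varepsilon$, this yields the $C_1(T+\varepsilon)$ bound. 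The separation into $\mathcal P^{\pm}_{in/out}$ in the commutator bounds reflects the fact that the scattered eigenvector components along outgoing directions hit the boundary of the truncation support at distance $\sim M_2\varepsilon$ in time $\sim M_2\varepsilon/|\tilde a_i|$, giving an $O(1)$ commutator, whereas incoming components travel back toward the shock and contribute only $C_3$ (small in $M_3$) plus a $C_2 T$ term from the matching region.

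For the waves away from the shock, the Green's kernels are Gaussians centered on characteristic curves $\chi_i^j$. In the truncation part $E_{k1}^j$, the drift factor $\lambda_i^j(\varphi(z,t),t)-\lambda_i^j(\chi_i^j(t,\tau,\varphi(y,\tau)),t)$ is small because $\chi_i^j$ tracks the characteristic, and the remaining part of $L^\varepsilon-L_i^j$ is controlled by $\|\dd f(\tilde u_{app}^\varepsilon)-\dd f(u)\|$, of order $\varepsilon$ away from shocks and of order $\varepsilon^{2\gamma-1}$ in the transition zones of width $\varepsilon^\gamma$ where $\mu\bigl((x-X_j)/\varepsilon^\gamma\bigr)$ varies; integrating a Gaussian against such a localized coefficient yields the $C_4(T+\varepsilon^{2\gamma-1})+C_5$ bound after standard Gaussian in-$L^1$ computations. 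The commutator part $E_{k2}^j$ is supported where the cut-offs $\mu$ or $K^\pm$ have nonzero derivative, i.e.\ on narrow strips of width $\sim\eta$ or $\sim M_1\varepsilon$; on these strips I would use the Gaussian decay of $G_i^j$ together with the transport picture: outgoing characteristics leave the strip quickly so the integral in $t$ is $O(\varepsilon)$, while incoming characteristics may re-enter, giving the $C_{1}(T+\varepsilon)$ contribution. The kernel $\tilde R_4^j$ has no shock singularity and is estimated directly from the characteristic approximation.

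The main obstacle is the outgoing-wave bound for $E_{22}^j$ (and symmetrically $E_{72}^j$), where the commutator between $L^\varepsilon$ and the cut-off $K^+((z-(j-1)L/m)/(M_1\varepsilon))$ lives on a thin strip of width $M_1\varepsilon$ right next to the shock and involves a $1/(M_1\varepsilon)$ factor from differentiating $K^+$, yet must remain uniformly bounded in $M_1$ in order to achieve the nilpotency of $\mathcal M$ when $M_1,M_2,M_3$ are chosen appropriately. The key is that on that strip the outgoing eigencomponent of $G^j$ has already separated from the Dirac-like singularity at $y$ and satisfies a bulk Gaussian estimate with fixed parabolic scale $\sqrt{\varepsilon t}$; the prefactor $1/(M_1\varepsilon)$ is compensated by the width $M_1\varepsilon$ of the strip and by Gaussian decay, provided that $4M_1\le M_2\le M_3/4$ so that the strip is well-separated from the source point $y$. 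Carrying this balance through rigorously, while simultaneously tracking the locally-bounded versus vanishing dependence of each constant $C_i$ on $M_1,M_2,M_3$, is the delicate bookkeeping step that the entire scheme of~\cite{GrenierRousset,Rousset} hinges on; I would follow their argument with the added complication of the present periodic summation $S_k^j=\sum_l\tilde S_k^j(\cdot,z+lL,y+lL)$, for which periodicity of $\tilde u_{app}^\varepsilon$ keeps all constants independent of the period index.
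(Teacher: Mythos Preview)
Your approach is correct and essentially the same as the paper's: split each $R_k^j$ into truncation error $E_{k1}^j$ and commutator $E_{k2}^j$, invoke the Zumbrun--Howard pointwise bounds for the shock kernel and the explicit Gaussians $G_i^j$ for the convective kernels, and follow the bookkeeping of \cite{GrenierRousset,Rousset}. Two small corrections are worth making. First, in your treatment of $E_{01}^j$ the prefactor $\dd f(\tilde u_{app}^\varepsilon)-\dd f(V^j(\cdot,\tau))+X_j'(\tau)-X_j'(t)+\varepsilon\delta_t^j$ is $O(|t-\tau|+\varepsilon)$, not $O(\varepsilon)$, because $V^j$ is frozen at time $\tau$; this still yields $C_1(T+\varepsilon)$ after integration but the time difference is what produces the $T$. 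Second, you identify the $M_1\varepsilon$-strip commutator in $E_{22}^j$ as the main obstacle, but that is precisely the difficulty already handled in \cite{Rousset}; the paper instead singles out as the genuinely new feature the commutator with the \emph{outer} cut-off $\mu((z-(j-1)L/m)/(2\eta))$, whose support has size $4\eta$ rather than $O(\varepsilon)$ and which does not appear in the single-shock setting. Your proposal does mention the $\eta$-strip in passing, so the argument would still go through, but the emphasis is misplaced.
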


\begin{proof}
  We do not give here the complete proof of the lemma. Mainly, it deals with terms that are not treated in~\cite{Rousset}: zeroth order terms and terms related to $\varphi$ or $\eta.$

  First, we consider the error at the shock $j$. On the support of $E_{01}^j$, we have that $\varphi_{z}(\cdot,t)\equiv1$ and $\varphi_{t}(\cdot,t)\equiv X_{j}'(t)-\varepsilon\delta_{t}^j(t)$ so we obtain
  \begin{multline*}
    E_{01}^j=\mu\left(\frac {z-(j-1)\frac Lm}{2\eta}\right)\mu\left(\frac {z-(j-1)\frac Lm}{M_3\varepsilon}\right)\Bigg[h\ G_\tau^{Sj}(t-\tau,z,y)\\
    +\left(\dd f(\tilde u_{app}^\varepsilon) -\dd f\left(V^j\left(\frac{z-(j-1)/m}{\varepsilon},\tau\right)\right)+X_j'(\tau)-X_j'(t)+\varepsilon\delta_{t}^j\right) G_{\tau z}^{Sj}\Bigg]
  \end{multline*}
  where $h$ is a bounded function of $t,z$.
  Hence, using the fact that $\dd f$ and $X_j$ are smooth and the expression of $u_{app}^\varepsilon$, we have $$|E_{01}^j|\leq C\mu\left(\frac {z-(j-1)\frac Lm}{2\eta}\right)\mu\left(\frac {z-(j-1)\frac Lm}{M_3\varepsilon}\right)\left[(|t-\tau|+\varepsilon)|G_{\tau z}^{Sj}|+|G_\tau^{Sj}|\right]$$
where $C$ is locally bounded in $M_3$.
  The calculations of~\cite{Rousset} give directly a bound for the first term, in $G_{\tau z}^{Sj}$. So, it only remains to bound the integral:
  $$\int_\tau^T\int_{(j-1)L/m-2M_3\varepsilon}^{(j-1)L/m+2M_3\varepsilon}|G_\tau^{Sj}(t-\tau,z,y)|\dd z\dd t,$$
  and, thanks to Proposition~\ref{propZumbrunHoward}, to estimate:
  $$\int_\tau^T\int_{-M_3\varepsilon}^{M_3\varepsilon}\frac 1{\sqrt{\varepsilon(t-\tau)}}\exp\left(-\frac{(z-a(t-\tau))^2}{M\varepsilon(t-\tau)}\right)\dd z\dd t.$$
  Using the classical change of variable $z'=\frac{z-a(t-\tau)}{\sqrt{M\varepsilon(t-\tau)}}$, we obtain the bound $$\int_\tau^T\int_{(j-1)L/m-2M_3\varepsilon}^{(j-1)L/m+2M_3\varepsilon}|G_\tau^{Sj}(t-\tau,z,y)|\dd z\dd t\leq CT$$ so this concludes the estimate for $E_{01}^j$. For $E_{02}^j$, the linear term in $G_\tau^{Sj}$ disappears in the commutator. Hence, there is no change with the proof of F.~Rousset.
  
  For the estimates on $\tilde R_{i}^{j},i=1,2,3,5,6,$ and $7$, we first remark that $\varphi_{z}\equiv1$ on the support of the errors, so $E_{i1}^j$ is bounded as in~\cite{Rousset}, except for the zeroth order term which is treated as in the case of $E_{01}^j$. So, we only consider the estimate on $E_{i2}^j$. We first remark that the support of this error is not of size $\varepsilon$. Indeed, the truncation $\mu\left(\frac{z-(j-1)\frac Lm}{2\eta}\right)$ adds some error terms with support of size $4\eta$. Though, we have to bound: 
  $$\begin{array}{rl}
    \displaystyle\int_{\tau}^T\int_{(j-1)\frac Lm+M_{1}\varepsilon}^{(j-1)\frac Lm+4\eta}& \displaystyle (\dd f(\tilde u_{app}^\varepsilon)-X_{j}'+\varepsilon{\delta_t^j})\frac1{2\eta}\mu'K^+PD^{+out}G^jP^{-1}\\
    &\displaystyle-\varepsilon\left[\frac1{4\eta^2}\mu''K^+PD^{+out}G^jP^{-1} + \frac1{2\eta} \mu'K^+(PD^{+out}G^j)_{z}P^{-1}\right]\dd z\dd t.
    \end{array}$$
  The two terms in $G^j$ are bounded by $CT$, and the term in $(PD^{+out}G^j)_z$ is bounded by $C(\varepsilon+T)$. Indeed, for $1\leq i\leq n,$
  \begin{multline*} 
    \int_{\tau}^T\int_{(j-1)\frac Lm+M_{1}\varepsilon}^{(j-1)\frac Lm+4\eta}|G_i^j| \\= \int_{\tau}^T\int_{(j-1)\frac Lm+M_{1}\varepsilon}^{(j-1)\frac Lm+4\eta}\frac{|\varphi_z(y,\tau)|}{\sqrt{4\pi\varepsilon(t-\tau)}}\exp\left(-\frac{(\varphi(z,t)-\chi_i^j(t,\tau,\varphi(y,\tau)))^2}{4\varepsilon(t-\tau)}\right)\dd z\dd t.
  \end{multline*}
  Hence, using the change of variable $z'=\frac{\varphi(z,t)-\chi_{i}^j(t,\tau,\varphi(y,\tau))}{\sqrt{4\varepsilon(t-\tau)}}$, we obtain
  $$\int_{\tau}^T\int_{(j-1)\frac Lm+M_{1}\varepsilon}^{(j-1)\frac Lm+4\eta}|G_i^j|\leq C\int_{\tau}^T\int_{\mathbb R} e^{-z'^2}\dd z'\dd t\leq CT.$$
  Similarly, we have
  \begin{multline*}
    \int_{\tau}^T\int_{(j-1)\frac Lm+M_{1}\varepsilon}^{(j-1)\frac Lm+4\eta}|G_{iz}^j|\\=\int_{\tau}^T\int_{(j-1)\frac Lm+M_{1}\varepsilon}^{(j-1)\frac Lm+4\eta}\frac{|\varphi_{z}(y,\tau)|}{\sqrt{4\pi\varepsilon(t-\tau)}}\exp\left(-\frac{(\varphi-\chi_{i}^j)^2}{4\varepsilon(t-\tau)}\right)\left(-\varphi_{z}\frac{\varphi-\chi_{i}^j}{2\varepsilon(t-\tau)}\right)\dd z\dd t.
  \end{multline*}
  And with the same change of variable, we obtain:
  $$\varepsilon\int_{\tau}^T\int_{(j-1)\frac Lm+M_{1}\varepsilon}^{(j-1)\frac Lm+4\eta} |G_{iz}^j|\leq C\sqrt\varepsilon \int_{\tau}^T\frac1{\sqrt{t-\tau}}\int_{\mathbb R} |z'|e^{-z'^2}\dd z'\dd t\leq C(T+\varepsilon),$$
  which gives the estimate for $E_{i2}^{j}.$
  
  Since there is not new difficulty in the proof of the estimates on $R_{4}^j$, we do not develop it here.
\end{proof}

We now use Lemma~\ref{lemmaerror} to bound the matrix $\mathcal{M}$. Since the Green's kernel depends on the shock, we note 
$$\sigma_{kl}^{ij}(T_1,T_2)=\sup_{T_1\leq \tau\leq T_2,y\in \supp\Pi_l^j}\int_{T_1}^{T_2}\int_\mathbb R|\Pi_k^i(t,z)R_l^j(t,\tau,z,y)|\dd z\dd t.$$
Since two shocks do not interact, the error coefficients $\sigma_{kl}^{ij}$ vanish for $i\neq j$ and $kl\neq0$ and for $|i-j|>1$. So the error matrix $\mathcal M$ is bounded:
$$\mathcal M\leq \left(\begin{array}{cccccc}	
    M_1 		& M_2 		&  		&	 	&  		& M_3 \\
    M_3 		& M_1	 	& M_2 	& 		&	 	& \\
    & \ddots 		& \ddots 	& \ddots 	& 		&\\
    &  			& \ddots	& \ddots	& \ddots	&\\
    &  			&  		& \ddots	& \ddots	& M_2\\
    M_2 		&  			& 		& 		& M_3 	& M_1
  \end{array}\right)$$
where $M_2$ is null except on the first column, and $M_3$ is null except on the first line. Moreover, using Lemma~\ref{lemmaerror}, and it was done in~\cite{Rousset}, we can choose $\alpha<1/2$ and $M_1,M_2$ such that when $M_3\to+\infty,\varepsilon,T\to0$, the matrices tend to
$$M_1\to\left(\begin{array}{cccccccc}
    \cdot	& \alpha 	& \cdot 	& C 		& \cdot	& \cdot	& \cdot	& \cdot\\
    \cdot	& \alpha 	& \cdot	& \alpha 	& \cdot	& \cdot	& \cdot	& \cdot\\
    C 	& \alpha 	& \cdot	& \alpha 	& \cdot 	& \cdot	& \cdot	& \cdot\\
    \cdot	& \alpha 	& \cdot	& \alpha 	& \cdot 	& \cdot	& \cdot	& \cdot\\
    \cdot	& \alpha 	& \cdot	& C		& \cdot	& C 		& \cdot	& \alpha\\
    \cdot	& \cdot	& \cdot	& \cdot 	& \cdot 	& \alpha 	& \cdot	& \alpha\\
    \cdot	& \cdot	& \cdot	& \cdot	& \cdot	& \alpha 	& \cdot	& \alpha\\
    \cdot	& \cdot	& \cdot	& \cdot	& \cdot 	& \alpha 	& \cdot	& \alpha\\
  \end{array}\right)
,\quad M_2\to\left(\begin{array}{cccccccc}
    \cdot 	& \cdot & \cdot & \cdot & \cdot & \cdot & \cdot & \cdot\\
    \cdot 	& \cdot & \cdot & \cdot & \cdot & \cdot & \cdot & \cdot\\
    \cdot 	& \cdot & \cdot & \cdot & \cdot & \cdot & \cdot & \cdot\\
    \cdot 	& \cdot & \cdot & \cdot & \cdot & \cdot & \cdot & \cdot\\
    \cdot 	& \cdot & \cdot & \cdot & \cdot & \cdot & \cdot & \cdot\\
    \cdot 	& \cdot & \cdot & \cdot & \cdot & \cdot & \cdot & \cdot\\
    C 		& \cdot & \cdot & \cdot & \cdot & \cdot & \cdot & \cdot\\
    \cdot 	& \cdot & \cdot & \cdot & \cdot & \cdot & \cdot & \cdot	
  \end{array}\right),$$
$$M_3\to\left(\begin{array}{cccccccc}
    \cdot & \cdot & \cdot & \cdot & \cdot & C 		& \cdot & \alpha\\
    \cdot & \cdot & \cdot & \cdot & \cdot & \cdot 	& \cdot & \cdot\\
    \cdot & \cdot & \cdot & \cdot & \cdot & \cdot 	& \cdot & \cdot\\
    \cdot & \cdot & \cdot & \cdot & \cdot & \cdot 	& \cdot & \cdot\\
    \cdot & \cdot & \cdot & \cdot & \cdot & \cdot 	& \cdot & \cdot\\
    \cdot & \cdot & \cdot & \cdot & \cdot & \cdot 	& \cdot & \cdot\\
    \cdot & \cdot & \cdot & \cdot & \cdot & \cdot 	& \cdot & \cdot\\
    \cdot & \cdot & \cdot & \cdot & \cdot & \cdot 	& \cdot & \cdot	
  \end{array}\right).$$

So that $\mathcal M\to\tilde{\mathcal M}$ where the eigenvalues of $\tilde {\mathcal M}$ are $0$ and $2\alpha$. 
We can now conclude that we have $\mathcal M^p\to0$ when $p\to\infty$. This ends the proof of Theorem~\ref{thmGreen}.

\section{Convergence}\label{CONVERGENCEPERSISTENCE}

The purpose of this section is to prove Theorem~\ref{thmconvergence} and to conclude the proof of Theorem~\ref{thethmpersistence}. For this sake, it remains to show that solution of
\begin{eqnarray}
  \label{Leps}&&L^\varepsilon\tilde w=-\tilde q^\varepsilon+Q_1(\tilde u_{app}^\varepsilon,\tilde w)-\frac1{\varphi_z}Q_2(\tilde u_{app}^\varepsilon,\tilde w)_z,\\
  \label{w0}&&\tilde w(z,0)=0
\end{eqnarray}
vanishes as $\varepsilon\to0$. In the previous section, we obtain estimate~(\ref{Greenestimate}) on the Green's function of operator $L^\varepsilon$. We recall that $Q_1$ and $Q_2$ are at least quadratic in $\tilde w$ and $\tilde q^\varepsilon$ verifies inequalities~\eqref{Linfinity},~\eqref{L1},~\eqref{qz}, and~(\ref{qzz}).

As in~\cite{GoodmanXin},~\cite{GrenierRousset} and~\cite{Rousset}, we use standard arguments for parabolic equations. First, we remark that local existence of a smooth solution $\tilde w$ for~\eqref{Leps}-\eqref{w0} is classical. 
Then we define $$T^\varepsilon=\sup\{T_1\in[0;T^*],\exists \tilde w\text{ solution on }\mathbb R\times [0;T_1),E(T_1)\leq1\},$$
where 
\begin{multline*}
  E(T_1)=\int_0^{T_1}\int_0^L\left(\frac{|\tilde w|}{\varepsilon^{3\gamma-\alpha}}+\frac{|\tilde w_{z}|}{\varepsilon^{3\gamma-\alpha-1/2}}+\frac{|\tilde w_t|}{\varepsilon^{3\gamma-2\alpha-1/2}}\right.\\\left.+\frac{|\tilde w_{zz}|}{\varepsilon^{3\gamma-2\alpha-1}}+\frac{|\tilde w_{tz}|}{\varepsilon^{3\gamma-2\alpha-1}}+\frac{|\tilde w_{tzz}|}{\varepsilon^{3\gamma-3\alpha-3/2}}+\frac{|\tilde w_{zzz}|}{\varepsilon^{3\gamma-2\alpha-2}}\right)\dd z\dd t
\end{multline*}
with $\alpha>0$ and $\gamma\in(2/3,1)$, chosen later.

Before estimating the $L^1-$norms, we define the notation:
$$\|\tilde w\|_{1}=\|\tilde w\|_{L^1((0;T^\varepsilon)\times[0,L))},\|\tilde w\|_{\infty}=\|\tilde w\|_{L^\infty((0;T^\varepsilon)\times[0,L))} $$
First, we use Fourier coefficients $c_n$ to estimate $\|\tilde w\|_{L^\infty}$. Indeed,
\begin{equation*}
  \begin{split}
    \tilde w(z,t) &= \int_0^t\tilde w_t(z,s)\dd s = \int_0^t\sum_{n\in\mathbb Z}c_n(\tilde w_t(s))e^{\imath nz}\dd s\\
    &=\int_0^tc_0(\tilde w_t(s))\dd s+\int_0^t\sum_{n\in\mathbb Z\backslash\{0\}}\frac {c_n(\tilde w_{tzz}(s))}{n^2}e^{\imath nz}\dd s
  \end{split}
\end{equation*}
$$\|\tilde w\|_\infty\leq C (\|\tilde w_t\|_1+\|\tilde w_{zzt}\|_1)\leq C\varepsilon^{3\gamma-3\alpha-3/2}$$
when $\varepsilon\leq 1$. Hence, $\|\tilde w\|_\infty$ tends to 0 as $\varepsilon$ goes to 0 if $\gamma$ and $\alpha$ are such that $\gamma-\alpha>1/2.$

We deduce that to prove that the time existence is $T^*$ and that we have the convergence, it remains to prove that $T^\varepsilon=T^*$ for $\varepsilon$ small enough. In the sequel, we suppose that $T^\varepsilon<T^*$ so $E(T^\varepsilon)=1$.

Then, we can use the estimate on $\|\tilde w\|_\infty$ to bound $\tilde w$ and its derivatives. First, using~(\ref{Leps})-(\ref{w0}), we have 
$$\tilde w(t,z)=\int_0^t\int_\mathbb R G^\varepsilon(t,\tau,z,y)\left(-\tilde q^\varepsilon+Q_1(\tilde u_{app}^\varepsilon,\tilde w)-\frac1{\varphi_z}Q_2(\tilde u_{app}^\varepsilon,\tilde w)_z\right)(\tau,y)\dd y\dd\tau.$$
Also, we have for periodic function $\psi$ the estimate $$\int_{\mathbb R}\left(\int_0^LG^\varepsilon(t,\tau,z,y)dz\right)\psi(y)\dd y=\mathcal O\left(\int_0^L\left(\int_0^LG^\varepsilon(t,\tau,z,y)dz\right)\psi(y)\dd y\right).$$
Therefore, we deduce
$$\|\tilde w\|_1\leq C\varepsilon^{3\gamma}+C \|\tilde w\|_\infty \|\tilde w\|_1 + C \|\tilde w\|_\infty(\|\tilde w\|_1+\|\tilde w_z\|_1)$$
so
$$\frac{\|\tilde w\|_1}{\varepsilon^{3\gamma-\alpha}}\leq C(\varepsilon^{\alpha}+\varepsilon^{3\gamma-3\alpha-3/2}(1+\varepsilon^{-1/2})).$$
This can be made smaller than $1$ as $\varepsilon\to0$ if $\alpha$ and $\gamma$ are such that $3\gamma-3\alpha-2>0.$

We now take the $z$ derivative $\tilde w$ and obtain an expression of $\tilde w_z$:
$$\tilde w_z(t,z)=\int_0^t\int_\mathbb R G_z^\varepsilon(t,\tau,z,y)\left(-\tilde q^\varepsilon+Q_1(\tilde u_{app}^\varepsilon,\tilde w)-\frac1{\varphi_z}Q_2(\tilde u_{app}^\varepsilon,\tilde w)_z\right)(\tau,y)\dd y\dd\tau$$
and the estimate:
$$\frac{\|\tilde w_z\|_1}{\varepsilon^{3\gamma-\alpha-1/2}}\leq C(\varepsilon^{\alpha}+\varepsilon^{3\gamma-3\alpha-3/2}(1+\varepsilon^{-1/2})).$$

Differentiating equation~\eqref{Leps} with respect to $t$ or $z$, we obtain equations verified by $w_t$ and $w_z$:
$$L^\varepsilon\tilde w_t=\left(-\tilde q^\varepsilon+Q_1(\tilde u_{app}^\varepsilon,\tilde w)-Q_2(\tilde u_{app}^\varepsilon,\tilde w)_z\right)_t+l_1(\tilde w,\tilde w_z,\varepsilon\tilde w_{zz}),$$
$$  L^\varepsilon\tilde w_z=\left(-\tilde q^\varepsilon+Q_1(\tilde u_{app}^\varepsilon,\tilde w)-Q_2(\tilde u_{app}^\varepsilon,\tilde w)_z\right)_z+l_2(\tilde w,\tilde w_z,\varepsilon\tilde w_{zz})$$
where $l_1$ and $l_2$ are continuous linear forms, uniformly bounded with respect to $\varepsilon$.
Thus, using again the Green's function, we get the inequalities:
$$\frac{\|\tilde w_t\|_1}{\varepsilon^{3\gamma-2\alpha-1/2}}\leq C(\varepsilon^{2\alpha+1/2}+\varepsilon^{3\gamma-3\alpha-3/2}(\varepsilon^ {\alpha+1/2}+1+\varepsilon^\alpha+\varepsilon^{-1/2})+\varepsilon^{\alpha+1/2}+\varepsilon^{\alpha}+\varepsilon^{1/2}),$$
$$\frac{\|\tilde w_{tz}\|_1}{\varepsilon^{3\gamma-2\alpha-1}}\leq C(\varepsilon^{2\alpha+1/2}+\varepsilon^{3\gamma-3\alpha-3/2}(\varepsilon^{\alpha+1/2}+\varepsilon^{-1/2})+\varepsilon^\alpha+\varepsilon^{1/2}),$$
$$\frac{\|\tilde w_{zz}\|_1}{\varepsilon^{3\gamma-2\alpha-1}}\leq C(\varepsilon^{2\alpha+1/2-\gamma}+\varepsilon^{3\gamma-3\alpha-3/2}(\varepsilon^{\alpha+1/2}+\varepsilon^{-1/2})+\varepsilon^\alpha+\varepsilon^{1/2}).$$
In the last inequality, the right-hand side can be made smaller than $1$ as $\varepsilon\to0$ if $\alpha$ and $\gamma$ are such that $2\alpha+1/2-\gamma>0.$

Differentiating again equation~\eqref{Leps} with respect to $t,z$ or $z,z$, we obtain the equalities:
$$L^\varepsilon\tilde w_{tz}=\left(-\tilde q^\varepsilon+Q_1(\tilde u_{app}^\varepsilon,\tilde w)-Q_2(\tilde u_{app}^\varepsilon,\tilde w)_z\right)_{tz}+l_3(\tilde w,\tilde w_z,\tilde w_t,\tilde w_{zz},\tilde w_{zt},\varepsilon \tilde w_{zzz},\varepsilon \tilde w_{zzt}),$$
$$L^\varepsilon\tilde w_{tz}=\left(-\tilde q^\varepsilon+Q_1(\tilde u_{app}^\varepsilon,\tilde w)-Q_2(\tilde u_{app}^\varepsilon,\tilde w)_z\right)_{zz}+l_4(\tilde w,\tilde w_z,\tilde w_{zz},\varepsilon \tilde w_{zzz})$$
where $l_3$ and $l_4$ are continuous linear forms, uniformly bounded with respect to $\varepsilon$.
As seen before, we deduce the inequalities:
$$\frac{\|\tilde w_{tzz}\|_1}{\varepsilon^{3\gamma-3\alpha-3/2}}\leq C(\varepsilon^{3\alpha+1-\gamma}+\varepsilon^{3\gamma-3\alpha-3/2}(\varepsilon^ {2\alpha+1}+\varepsilon^{-1/2})+\varepsilon^{2\alpha+1}+\varepsilon^{\alpha}+\varepsilon^{1/2}),$$
$$\frac{\|\tilde w_{zzz}\|_1}{\varepsilon^{3\gamma-2\alpha-2}}\leq C(\varepsilon^{2\alpha+3/2-2\gamma}+\varepsilon^{3\gamma-3\alpha-3/2}(\varepsilon^{\alpha+3/2}+\varepsilon^{-1/2})+\varepsilon^{\alpha+3/2}+\varepsilon^{1/2}).$$
The both bounds can be made smaller than $1$ as $\varepsilon\to0$ if $\alpha$ and $\gamma$ are such that $3\alpha+1-\gamma>0$ and $2\alpha+3/2-2\gamma>0.$

We can now verify that there exist $\gamma$ and $\alpha$ checking all the previous conditions: they define a non-empty trapeze in the plane $\alpha,\gamma$. Hence, we have proved that for $\varepsilon$ small enough, we have $E(T^\varepsilon)\leq\varepsilon^\beta$ with $\beta>0$. Consequently, we can not have $T^\varepsilon<T^*$ for such an $\varepsilon$. Moreover, using the change of variable $\varphi$, we return to $w=u^\varepsilon-u_{app}^\varepsilon$. Thus, we have
$$\|w\|_\infty\leq C\varepsilon^{3\gamma-3\alpha-3/2},$$
and
$$\|u^\varepsilon-u_{app}^\varepsilon\|_\infty\leq C\varepsilon^{3\gamma-3\alpha-3/2}.$$
Inequality $E(T^*)\leq 1$ also gives $$\|w\|_{L^\infty(L^1)}\leq\|w_t\|_1\leq C\|\tilde w_t\|_1\leq C\varepsilon^{3\gamma-\alpha_1/2}$$
so $$\|u^\varepsilon-u_{app}^\varepsilon\|_{L^\infty(L^1)}\leq C\varepsilon^{3\gamma-2\alpha-1/2}.$$
This concludes the proof of Theorem~\ref{thmconvergence}.

It only remains to prove Theorem~\ref{thethmpersistence}. Since $\|u_{app}^\varepsilon-u\|_{L^\infty(L^1)}\to0,$ we have the convergence in $L^{\infty}((0;T^*),L^1(0;L)).$ Moreover, the fast convergences of the viscous shock profiles $V^j$ give the last point of the theorem. 

\section{Conclusion and perspectives}
In this article, we have proved the persistence of solutions of the inviscid equation~\eqref{eqhyp} close to roll-waves by adding full viscosity. One of the main assumptions that we have taken is the periodicity of the solution of~\eqref{eqhyp}. This one is not necessary in the construction of the approximate solution $u_{app}^\varepsilon$, but it gives that $u_1$ and $u_2$ stay bounded (because periodic). An idea to weaken this assumption would be that solution $u$ of~\eqref{eqhyp} approximates the roll-wave as $|x|$ goes to infinity (in particular, the shock curves would be closer as $|x|$ goes to infinity). Moreover, the periodicity of $u$ allows us to use the method of \cite{GrenierRousset} to construct the Green's function of $L^\varepsilon$. Indeed, in this step, the number of Green's functions that we consider is proportional to the number of shocks. In the periodic case, taking into account the repetitions, it returns to a finite number of periodic Green's kernels. Thus, we can write a matrix of errors, and deduce Theorem \ref{thmGreen} on the existence of the Green's function relative to $L^\varepsilon$ and estimates on this Green's function. Another way to hope to obtain a finite number of Green's functions could be to assume that $u$ coincides with the roll-wave outside a bounded domain.

Furthermore, Theorem~\ref{thethmpersistence} proved here is valid in the case of an artificial viscosity. Therefore, one should also study the persistence in the case of real viscosity as presented to the system of Saint Venant~\eqref{StVenant}.

Finally, one can also be interested in what happens in the multidimensional case. For this, we could build on work already done in the case of a single multidimensional shock, based on the study of Evans' functions at each shock~\cite{GuesMetivierWilliamsZumbrun}.

\paragraph*{Acknowledgment.} The author wishes to thank Pascal Noble for suggesting the problem and for fruitful discussions.
\bibliographystyle{plain}
\bibliography{biblio}

\begin{thebibliography}{10}

\bibitem{DiPerna}
Ronald~J. DiPerna.
\newblock Convergence of the viscosity method for isentropic gas dynamics.
\newblock {\em Comm. Math. Phys.}, 91(1):1--30, 1983.

\bibitem{Fife}
Paul~C. Fife.
\newblock {\em Dynamics of internal layers and diffusive interfaces}, volume~53
  of {\em CBMS-NSF Regional Conference Series in Applied Mathematics}.
\newblock Society for Industrial and Applied Mathematics (SIAM), Philadelphia,
  PA, 1988.

\bibitem{GoodmanXin}
Jonathan Goodman and Zhou~Ping Xin.
\newblock Viscous limits for piecewise smooth solutions to systems of
  conservation laws.
\newblock {\em Arch. Rational Mech. Anal.}, 121(3):235--265, 1992.

\bibitem{GrenierRousset}
Emmanuel Grenier and Fr{\'e}d{\'e}ric Rousset.
\newblock Stability of one-dimensional boundary layers by using {G}reen's
  functions.
\newblock {\em Comm. Pure Appl. Math.}, 54(11):1343--1385, 2001.

\bibitem{GuesMetivierWilliamsZumbrun}
Olivier Gu{{\`e}}s, Guy M{{\'e}}tivier, Mark Williams, and Kevin Zumbrun.
\newblock Existence and stability of multidimensional shock fronts in the
  vanishing viscosity limit.
\newblock {\em Arch. Ration. Mech. Anal.}, 175(2):151--244, 2005.

\bibitem{KopellHoward}
N.~Kopell and L.~N. Howard.
\newblock Bifurcations and trajectories joining critical points.
\newblock {\em Advances in Math.}, 18(3):306--358, 1975.

\bibitem{Noble07}
Pascal Noble.
\newblock Roll-waves in general hyperbolic systems with source terms.
\newblock {\em SIAM J. Appl. Math.}, 67(4):1202--1212 (electronic), 2007.

\bibitem{Noble}
Pascal Noble.
\newblock Persistence of roll waves for the {S}aint {V}enant equations.
\newblock {\em SIAM J. Math. Anal.}, 40(5):1783--1814, 2008/09.

\bibitem{Rousset}
Fr\'ed\'eric Rousset.
\newblock Viscous approximation of strong shocks of systems of conservation
  laws.
\newblock {\em SIAM J. Math. Anal.}, 35(2):492--519 (electronic), 2003.

\bibitem{Volpert}
A.~I. Volpert.
\newblock Spaces {${\rm BV}$} and quasilinear equations.
\newblock {\em Mat. Sb. (N.S.)}, 73 (115):255--302, 1967.

\bibitem{ZumbrunHoward}
Kevin Zumbrun and Peter Howard.
\newblock Pointwise semigroup methods and stability of viscous shock waves.
\newblock {\em Indiana Univ. Math. J.}, 47(3):741--871, 1998.

\bibitem{ZumbrunSerre}
Kevin Zumbrun and Denis Serre.
\newblock Viscous and inviscid stability of multidimensional planar shock
  fronts.
\newblock {\em Indiana Univ. Math. J.}, 48(3):937--992, 1999.

\end{thebibliography}
\end{document}